\providecommand{\U}[1]{\protect\rule{.1in}{.1in}}
\newtheorem{teor}{Theorem}
\newtheorem{prop}{Proposition}
\newtheorem{con}{Conjecture}
\newtheorem{lem}{Lemma}
\theoremstyle{definition}
\renewcommand{\subjclassname}{AMS \textup{2010} Mathematics Subject
Classification\ }
\begin{document}

\author{J.M. Grau Ribas}
\address{Departamento de Matemáticas, Universidad de Oviedo\\
Avda. Calvo Sotelo s/n, 33007 Oviedo, Spain}
\email{grau@uniovi.es}

\title{Concerning an adversarial version of the Last-Success-Problem}

\begin{abstract}
There are $n$ independent Bernoulli random variables with parameters $p_i$ that are observed sequentially. Two players, A and B, act in turns starting with player A. Each player has the possibility on his turn, when $I_k=1$, to choose whether to continue with his turn or to pass his turn on to his opponent for observation of the variable $I_{k+1}$. If $I_k=0$, the player must necessarily to continue with his turn. After observing the last variable, the player whose turn it is wins if $I_n=1$, and loses otherwise. We determine the optimal strategy for the player whose turn it is and establish the necessary and sufficient condition for player A to have a greater probability of winning than player B. We find that, in the case of $n$ Bernoulli random variables with parameters $1/n$, the probability of player A winning is decreasing with $n$ towards its limit $\frac{1}{2} - \frac{1}{2\,e^2}=0.4323323...$. We also study the game when the parameters are the results of uniform random variables, $\mathbf{U}[0,1]$.
\end{abstract}

\maketitle
\keywords{Keywords: Last-Success-Problem; Odds-Theorem; Optimal stopping; Optimal threshold}

\subjclassname{60G40, 62L15}

\section{Introduction}

The Last-Success-Problem (LSP) is the problem of maximizing the probability
of stopping on the last success in a finite sequence of Bernoulli trials.
There are $n$ Bernoulli random variables which are observed sequentially.
The problem is to find a stopping rule to maximize the probability of stopping at the last "1". This problem has been studied by Hill and Krengel
\cite{1992} and Hsiau and Yang \cite{2000} for the case in which the random
variables are independent and was simply and elegantly solved by T.F. Bruss
in \cite{BR1} with the following famous result.

\begin{teor}
(Odds-Theorem, T.F. Bruss 2000). Let $I_{1},I_{2},...,I_{n}$ be $n$
independent Bernoulli random variables with known $n$. We denote by ($%
i=1,...,n$) $p_{i}$ the parameter of $I_{i}$; i.e. ($p_{i}=P(I_{i}=1)$). Let
$q_{i}=1-p_{i}$ and $r_{i}=p_{i}/q_{i}$. We define the index

\begin{equation*}
\mathbf{s}=
\begin{cases}
\max\{1\leq k\leq n: \sum_{j=k}^n r_j \geq 1\}, & \text{if $\sum_{i=1}^n
r_i\geq 1$ }; \\
1, & \text{ otherwise }%
\end{cases}%
\end{equation*}

To maximize the probability of stopping on the last $"1"$ in the sequence,
it is optimal to stop on the first $"1"$ that we encounter among the
variables $I_{\mathbf{s}},I_{\mathbf{s}+1},...,I_{n}$.

The optimal win probability is given by

$$\mathcal{V}(p_1,...,p_n):=\displaystyle{\ \left( \prod_{j=\mathbf{s} }^{n}q_j \right) } \displaystyle{%
\left(\sum_{i=\mathbf{s} }^{n} r_i \right)}$$
\end{teor}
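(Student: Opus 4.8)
The plan is to prove the statement by backward induction on the stage index, casting the problem as a finite-horizon optimal stopping problem whose state is simply the current index $k$: past observations are irrelevant to future rewards, since winning means stopping at the \emph{overall} last success and a success once passed can never be revisited. First I would record the fundamental recursion for the optimal win probability. Write $V_k$ for the probability of stopping at the last ``1'' of the whole sequence when play reaches stage $k$ without having stopped, and set $c_k=\prod_{j=k+1}^n q_j$, the probability that no further success occurs after stage $k$. Since stopping on a ``0'' can never win, the only genuine decision is made upon seeing a ``1'', and conditioning on $I_k$ yields
\[
V_k=p_k\,\max\!\left(c_k,\,V_{k+1}\right)+q_k\,V_{k+1},
\]
with terminal value $V_n=p_n$. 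Upon a success at stage $k$ it is thus optimal to stop exactly when $c_k\ge V_{k+1}$.

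Second, I would evaluate the threshold strategy ``stop at the first success at or after stage $t$''. A direct enumeration over the location $j\ge t$ of that first success, using independence, telescopes neatly: the contribution of winning by stopping at $j$ is $\bigl(\prod_{i=t}^{j-1}q_i\bigr)\,p_j\,\bigl(\prod_{i=j+1}^n q_i\bigr)=r_j\prod_{i=t}^n q_i$, so the win probability of this strategy is $W(t)=\bigl(\prod_{i=t}^n q_i\bigr)\sum_{j=t}^n r_j$. This already matches the claimed value $\mathcal V$ once we know the optimal threshold is $t=\mathbf s$. The algebraic heart of the argument is the one-line identity $W(t)-W(t+1)=p_t\bigl(\prod_{i=t+1}^n q_i\bigr)\bigl(1-\sum_{j=t+1}^n r_j\bigr)$, obtained from $q_tr_t=p_t$; since $\sum_{j=k}^n r_j$ is non-increasing in $k$, this shows $W$ increases as the threshold is lowered precisely while the tail of odds stays below $1$, so $W$ is maximized at $t=\mathbf s$.

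Third---and this is the step I expect to be the main obstacle---I would upgrade ``best threshold'' to ``best among all strategies'' by showing the optimal rule really is this threshold, i.e.\ that the problem is monotone. I would prove by downward induction that $V_k=W(k)$ for every $k\ge\mathbf s$: here $\sum_{j=k+1}^n r_j<1$, whence $V_{k+1}=W(k+1)=c_k\sum_{j=k+1}^n r_j<c_k$, so stopping is optimal and the recursion collapses, via $q_k r_k=p_k$, to $V_k=p_kc_k+q_kc_k\sum_{j=k+1}^n r_j=W(k)$, with base case $V_n=p_n=W(n)$. For $k<\mathbf s$ one has $\sum_{j=k+1}^n r_j\ge 1$, so $c_k\le c_k\sum_{j=k+1}^n r_j=W(k+1)\le V_{k+1}$; hence continuing is optimal at every success below $\mathbf s$, the recursion gives $V_k=V_{k+1}$, and therefore $V_1=\cdots=V_{\mathbf s}=W(\mathbf s)=\mathcal V(p_1,\dots,p_n)$. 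The delicate point throughout is keeping the two regimes aligned: the inequality forcing ``stop'' for $k\ge\mathbf s$ and the one forcing ``continue'' for $k<\mathbf s$ both hinge on comparing the tail odds-sum with $1$, so I would carefully check the boundary index $k=\mathbf s$ (where $\sum_{j=\mathbf s}^n r_j\ge1$ but $\sum_{j=\mathbf s+1}^n r_j<1$) together with the degenerate case $\mathbf s=1$ that arises when $\sum_{i=1}^n r_i<1$.
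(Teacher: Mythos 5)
The paper never proves this statement: it is quoted as known background (Bruss's Odds-Theorem), with the proof delegated to the citation \cite{BR1}, so there is no in-paper argument to compare yours against. Judged on its own, your proof is correct and is essentially the standard dynamic-programming derivation of the odds theorem. The recursion $V_k=p_k\max(c_k,V_{k+1})+q_kV_{k+1}$, $V_n=p_n$, is the right one (stopping on a ``0'' can never win, and by independence the index is the whole state); the evaluation $W(t)=\bigl(\prod_{i=t}^n q_i\bigr)\sum_{j=t}^n r_j$ and the difference identity $W(t)-W(t+1)=p_t\bigl(\prod_{i=t+1}^n q_i\bigr)\bigl(1-\sum_{j=t+1}^n r_j\bigr)$ both check out; and your backward induction splits correctly at the boundary: for $k\ge\mathbf{s}$ the tail sum $\sum_{j=k+1}^n r_j<1$ gives $V_{k+1}=W(k+1)<c_k$, so stopping is optimal and $V_k=W(k)$, while for $k<\mathbf{s}$ the tail sum is $\ge 1$, giving $c_k\le W(k+1)\le V_{k+1}$, so continuing is optimal and $V_k=V_{k+1}$, whence $V_1=\cdots=V_{\mathbf{s}}=W(\mathbf{s})=\mathcal{V}(p_1,\dots,p_n)$. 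Two small points worth making explicit if you write this up: the inequality $W(k+1)\le V_{k+1}$ is legitimate only because $V_{k+1}$ is \emph{defined} as the supremum over all stopping rules (so it dominates the value of the particular threshold rule); alternatively you can avoid it entirely, since once $V_{\mathbf{s}}=W(\mathbf{s})$ is known, the chain $c_k\le\prod_{j=\mathbf{s}}^n q_j\le W(\mathbf{s})=V_{k+1}$ closes the induction for $k<\mathbf{s}$. Also, like the statement itself, you implicitly assume $q_i>0$ so that all odds $r_i$ are finite.
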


We propose the following adversarial version of the problem in this paper.
There are $n$ independent Bernoulli random variables $I_{i}$ with parameters
$p_{i}$ that are observed sequentially. Two players, A and B, act in turns
starting with A. After observing the value of $I_{k}$, if $I_{k}=1$, then
the player whose turn it is may pass his turn to his opponent or use it and
observe the variable $I_{k+1}$. When the last event is reached, if the
result is success ($I_{n}=1$), the player whose turn it is wins, and loses
otherwise. Specifically, if $I_{i}=0$ for all $i$, player A loses. This is
reminiscent of the \emph{hot potato game} in which the goal is not to be holding
the hot potato at the end of the game, with the rule of being able to pass
it on (if one so wishes) to one's opponent when $I_{k}=1$.

Let us denote by $\mathbf{V}_{k}$ the probability of the player whose turn
it is winning when we are about to observe the variable $I_{k}$. In
particular, the probability of player A winning is $\mathbf{V}_{1}$; hence
the probability of player B winning is $1-\mathbf{V}_{1}$. Likewise, on
observing the last random variable, the player whose turn it is will win
with probability $p_{k}$, i.e. $\mathbf{V}_{n}=p_{n}$.

The dynamic program to find the optimal strategy is straightforward. After
observing the variable $I_{k}$, if $I_{k}=0$, which occurs with probability $%
1-p_{k}$, the player then irrevocably goes on to observe the variable $%
I_{k+1}$ without giving up his turn. If $I_{k}=1$, the optimal strategy
of the player whose turn it is will consist in passing his turn to his
opponent if $\mathbf{V}_{k+1}<\frac{1}{2}$ and in continuing with his turn if $%
\mathbf{V}_{k+1}\geq \frac{1}{2}$. We shall then have the following recurrence.

\begin{equation*}
\mathbf{V}_k=p_k \cdot \max\{\mathbf{V}_{k+1},1-\mathbf{V}_{k+1}\}+(1-p_k) \cdot \mathbf{%
V}_{k+1}; \mathbf{V}_n=p_n.
\end{equation*}

\section{Optimal strategy}

We shall see that the optimal strategy is extremely simple and that it is
also very easy to determine which of the two players has the greatest
probability of winning. Another matter altogether is the exact calculation
of this probability, which generally requires the computation of recurrence
or calculations of the equivalent cost.

\begin{prop}
\label{main}If for all $k\in \lbrack r,n]$, $p_{k}<\frac{1}{2}$, then for all $k\in
\lbrack r,n]$ the following is fulfilled:
\begin{equation*}
p_{n}=\mathbf{V}_{n}<\mathbf{V}_{k+1}<\mathbf{V}_{k}<\frac{1}{2}.
\end{equation*}
\end{prop}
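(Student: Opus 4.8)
The plan is to argue by backward induction on $k$, running from $k=n$ down to $k=r$, carrying the two assertions $\mathbf{V}_k<\tfrac12$ and $\mathbf{V}_{k+1}<\mathbf{V}_k$ simultaneously, since the second rests on the first. The base case is immediate: $\mathbf{V}_n=p_n<\tfrac12$ by hypothesis.

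For the inductive step, suppose I already know $\mathbf{V}_{k+1}<\tfrac12$. The crucial first move is to resolve the maximum in the defining recurrence. Because $\mathbf{V}_{k+1}<\tfrac12$ forces $1-\mathbf{V}_{k+1}>\tfrac12>\mathbf{V}_{k+1}$, we have $\max\{\mathbf{V}_{k+1},1-\mathbf{V}_{k+1}\}=1-\mathbf{V}_{k+1}$, and the recurrence collapses to the affine expression
$$\mathbf{V}_k=p_k(1-\mathbf{V}_{k+1})+(1-p_k)\mathbf{V}_{k+1}=p_k+(1-2p_k)\,\mathbf{V}_{k+1}.$$
Everything else is read off from this identity.

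For the bound $\mathbf{V}_k<\tfrac12$, I would use that $1-2p_k>0$ (as $p_k<\tfrac12$) together with $\mathbf{V}_{k+1}<\tfrac12$, so that $\mathbf{V}_k<p_k+(1-2p_k)\tfrac12=\tfrac12$. For the monotonicity, I would subtract to obtain
$$\mathbf{V}_k-\mathbf{V}_{k+1}=p_k+(1-2p_k)\mathbf{V}_{k+1}-\mathbf{V}_{k+1}=p_k\bigl(1-2\mathbf{V}_{k+1}\bigr),$$
which is strictly positive because $p_k>0$ and $\mathbf{V}_{k+1}<\tfrac12$ gives $1-2\mathbf{V}_{k+1}>0$. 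This closes the induction and yields the full chain $p_n=\mathbf{V}_n<\cdots<\mathbf{V}_{k+1}<\mathbf{V}_k<\tfrac12$.

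The proof is entirely routine once the maximum has been removed, so there is no genuine obstacle; the only subtlety worth flagging is that this is really a joint induction. The monotonicity at index $k$ cannot be obtained in isolation, since resolving the $\max$ — and hence even writing $\mathbf{V}_k$ in closed affine form — already requires the inductive hypothesis $\mathbf{V}_{k+1}<\tfrac12$. I would also record that strictness of $\mathbf{V}_{k+1}<\mathbf{V}_k$ uses $p_k>0$: if a degenerate parameter $p_k=0$ were permitted, the corresponding step would flatten to an equality.
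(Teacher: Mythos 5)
Your proof is correct and follows essentially the same route as the paper: backward induction from $k=n$, using the hypothesis $\mathbf{V}_{k+1}<\frac{1}{2}$ to resolve the maximum as $1-\mathbf{V}_{k+1}$, then deriving both $\mathbf{V}_{k+1}<\mathbf{V}_k$ and $\mathbf{V}_k<\frac{1}{2}$; your affine rewriting $\mathbf{V}_k=p_k+(1-2p_k)\mathbf{V}_{k+1}$ is just a cleaner algebraic packaging of the paper's term-by-term comparisons. Your remark that strictness of $\mathbf{V}_{k+1}<\mathbf{V}_k$ needs $p_k>0$ is a fair observation (the paper's argument tacitly uses this too), but it does not change the substance.
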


\begin{proof}
It is evident that $\mathbf{V}_{n}=p_{n}<\frac{1}{2}$.  We proceed by backward induction. We assume that the proposition is true  for all $i\in \lbrack k+1,n]$ and shall prove that it also holds for $i=k$.
From the induction hypothesis, $\mathbf{V}_{k+1}<\frac{1}{2}$, therefore $1-\mathbf{V%
}_{k+1}>\frac{1}{2}>\mathbf{V}_{k+1}$, and hence
\begin{equation*}
\mathbf{V}_k = p_k (1-\mathbf{V}_{k+1})+(1-p_k) \mathbf{V}_{k+1}> p_k
\mathbf{V}_{k+1} +(1-p_k) \mathbf{V}_{k+1}=\mathbf{V}_{k+1}.
\end{equation*}
On the other hand, considering that  $$\textrm{if }x,y \in [0,1/2) \textrm{  then }x(1-y)+(1-x)y<\frac{1}{2},$$
it turns out that
\begin{equation*}
\mathbf{V}_k = p_k (1-\mathbf{V}_{k+1})+(1-p_k) \mathbf{V}_{k+1}<\frac{1}{2}%
.
\end{equation*}
\end{proof}

\begin{prop}
\label{main}If $p_{r}>\frac{1}{2} $ and for all $k\in \lbrack r+1,n]$, $p_{k}<\frac{1}{2}$, then:
\begin{equation*}
 \mathbf{V}_{1}=...=\mathbf{V}_{r-1}=\mathbf{V}_{r}>\frac{1}{2}.
\end{equation*}
\end{prop}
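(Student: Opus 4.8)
The plan is to split the argument into three stages: locate the value $\mathbf{V}_{r+1}$, compute $\mathbf{V}_{r}$ and check that it exceeds $\frac{1}{2}$, and then show that the recurrence becomes stationary for indices below $r$.

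First I would invoke the previous proposition. Since $p_{k}<\frac{1}{2}$ for all $k\in[r+1,n]$, that result applied to the block $[r+1,n]$ yields in particular $\mathbf{V}_{r+1}<\frac{1}{2}$; this is the only fact I need to import. (If $r=n$ the claim about $\mathbf{V}_r$ is immediate, since then $\mathbf{V}_{r}=\mathbf{V}_{n}=p_{n}=p_{r}>\frac{1}{2}$, and one passes directly to the last stage.)

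Next, because $\mathbf{V}_{r+1}<\frac{1}{2}$ we have $\max\{\mathbf{V}_{r+1},1-\mathbf{V}_{r+1}\}=1-\mathbf{V}_{r+1}$, so the recurrence reads $\mathbf{V}_{r}=p_{r}(1-\mathbf{V}_{r+1})+(1-p_{r})\mathbf{V}_{r+1}=p_{r}+(1-2p_{r})\mathbf{V}_{r+1}$. The key algebraic step is then the factorization
\begin{equation*}
\mathbf{V}_{r}-\frac{1}{2}=\left(p_{r}-\frac{1}{2}\right)\left(1-2\mathbf{V}_{r+1}\right),
\end{equation*}
whose two factors are both strictly positive: the first by the hypothesis $p_{r}>\frac{1}{2}$, the second because $\mathbf{V}_{r+1}<\frac{1}{2}$. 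Hence $\mathbf{V}_{r}>\frac{1}{2}$.

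Finally, I would run a backward induction on $k$ from $r-1$ down to $1$ to establish the chain of equalities. The observation that drives this is that whenever $\mathbf{V}_{k+1}\geq\frac{1}{2}$ one has $\max\{\mathbf{V}_{k+1},1-\mathbf{V}_{k+1}\}=\mathbf{V}_{k+1}$, and the recurrence collapses to $\mathbf{V}_{k}=p_{k}\mathbf{V}_{k+1}+(1-p_{k})\mathbf{V}_{k+1}=\mathbf{V}_{k+1}$, independently of the value of $p_{k}$. Starting from $\mathbf{V}_{r}>\frac{1}{2}$, this gives $\mathbf{V}_{r-1}=\mathbf{V}_{r}>\frac{1}{2}$, and the inductive hypothesis $\mathbf{V}_{k+1}>\frac{1}{2}$ is reproduced at each step, so the induction carries through to yield $\mathbf{V}_{1}=\dots=\mathbf{V}_{r-1}=\mathbf{V}_{r}$. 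There is no serious obstacle: the proof is essentially a computation. The only points that require care are the sign bookkeeping in the factorization giving $\mathbf{V}_{r}>\frac{1}{2}$, and the realization that no assumption on $p_{1},\dots,p_{r-1}$ is needed, precisely because any value at or above $\frac{1}{2}$ is a fixed point of the recurrence.
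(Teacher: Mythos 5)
Your proof is correct and follows essentially the same route as the paper: import $\mathbf{V}_{r+1}<\frac{1}{2}$ from the preceding proposition, deduce $\mathbf{V}_{r}=p_{r}(1-\mathbf{V}_{r+1})+(1-p_{r})\mathbf{V}_{r+1}>\frac{1}{2}$ (your factorization $\bigl(p_{r}-\frac{1}{2}\bigr)\bigl(1-2\mathbf{V}_{r+1}\bigr)>0$ is just an explicit verification of the inequality the paper cites as a fact), and then observe that any value $\geq\frac{1}{2}$ is a fixed point of the recurrence, giving the chain of equalities. Your treatment is slightly more careful than the paper's in that you handle the degenerate case $r=n$ and spell out the backward induction, but these are elaborations, not a different argument.
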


\begin{proof} We will take into account that$$\textrm{If }x \in (1/2,1]\textrm{ and } y \in [0,1/2)\textrm{  then }x(1-y)+(1-x)y>\frac{1}{2}.$$
For   Proposition \ref{main}, $\mathbf{V}_{r+1}<\frac{1}{2}$, so

$$\mathbf{V}_{r}=p_{r} \cdot \max \{\mathbf{V}_{r+1},1-\mathbf{V}_{r+1}\}+(1-p_{r}) \cdot\mathbf{V}_{r+1}$$

$$=p_{r} \cdot (1-\mathbf{V}_{r+1})+(1-p_{r}) \cdot\mathbf{V}_{r+1}> \frac{1}{2}$$

Now, since  $\mathbf{V}_{r}>\frac{1}{2}$, it is immediately followed by the dynamic program that $$\mathbf{V}_{1}=...=\mathbf{V}_{r-1}=\mathbf{V}_{r}.$$

\end{proof}

\begin{prop}
\label{main}If $p_{r}=\frac{1}{2} $ and for all $k\in \lbrack r+1,n]$, $p_{k}<\frac{1}{2}$, then:
\begin{equation*}
\mathbf{V}_{1}=...=\mathbf{V}_{r-1}=\mathbf{V}_{r}=\frac{1}{2}.
\end{equation*}
\end{prop}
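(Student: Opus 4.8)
The plan is to mirror the argument used for the preceding proposition, exploiting the fact that the hypotheses placed on $p_{r+1},\dots,p_{n}$ are precisely those of the first Proposition of this section applied to the interval $[r+1,n]$. First I would invoke that Proposition to conclude that $\mathbf{V}_{r+1}<\frac{1}{2}$, so that $1-\mathbf{V}_{r+1}>\frac{1}{2}>\mathbf{V}_{r+1}$ and therefore $\max\{\mathbf{V}_{r+1},1-\mathbf{V}_{r+1}\}=1-\mathbf{V}_{r+1}$.

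Next I would substitute $p_{r}=\frac{1}{2}$ directly into the recurrence. Since the maximum equals $1-\mathbf{V}_{r+1}$, this yields
\begin{equation*}
\mathbf{V}_{r}=\tfrac{1}{2}\,(1-\mathbf{V}_{r+1})+\tfrac{1}{2}\,\mathbf{V}_{r+1}=\tfrac{1}{2},
\end{equation*}
so the value at the index $r$ is pinned exactly to $\frac{1}{2}$, irrespective of how small $\mathbf{V}_{r+1}$ happens to be.

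The final step is a short backward induction over $k=r-1,r-2,\dots,1$. The key observation is that $\frac{1}{2}$ is a fixed point of the dynamic program: if $\mathbf{V}_{k+1}=\frac{1}{2}$, then the two arguments of the maximum coincide, $\max\{\mathbf{V}_{k+1},1-\mathbf{V}_{k+1}\}=\frac{1}{2}$, and the recurrence collapses to
\begin{equation*}
\mathbf{V}_{k}=p_{k}\cdot\tfrac{1}{2}+(1-p_{k})\cdot\tfrac{1}{2}=\tfrac{1}{2}
\end{equation*}
independently of the value of $p_{k}$. Hence $\mathbf{V}_{r-1}=\cdots=\mathbf{V}_{1}=\frac{1}{2}$, which closes the chain of equalities.

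I do not anticipate any genuine obstacle here: this is simply the boundary case $p_{r}=\frac{1}{2}$ sitting between the two strict-inequality Propositions. The only point worth stressing is that the parameters $p_{1},\dots,p_{r-1}$ play no role whatsoever once the value $\frac{1}{2}$ has been attained at index $r$, precisely because $\frac{1}{2}$ is an absorbing fixed point of the recurrence.
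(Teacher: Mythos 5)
Your proposal is correct and follows essentially the same route as the paper: invoke the preceding proposition to get $\mathbf{V}_{r+1}<\frac{1}{2}$, evaluate the recurrence at $p_r=\frac{1}{2}$ to obtain $\mathbf{V}_r=\frac{1}{2}$, and then propagate backwards. Your explicit fixed-point argument for the steps $k<r$ merely spells out what the paper dismisses as following ``immediately from the dynamic program,'' so there is nothing to add.
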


\begin{proof} For   Proposition \ref{main}, $\mathbf{V}_{r+1}<\frac{1}{2}$, so

$$\mathbf{V}_{r}=\frac{1}{2} \cdot \max \{\mathbf{V}_{r+1},1-\mathbf{V}_{r+1}\}+(1-\frac{1}{2}) \cdot\mathbf{V}_{r+1}=\frac{1}{2}$$
 
Now, since $\mathbf{V}_{r}=\frac{1}{2}$, it follows immediately from the dynamic program that $$\mathbf{V}_{1}=...=\mathbf{V}_{r-1}=\mathbf{V}_{r}.$$

\end{proof}

From the previous propositions the following result is followed without difficulty.

\begin{prop}
Let $\Omega _{r}:=\{k\in \lbrack r,n]:p_{k}\geq \frac{1}{2}\}$ and considering
\begin{equation*}
\mathfrak{u}_{r}:=%
\begin{cases}
\max \Omega , & \text{if $\Omega _{r}\neq \emptyset $ }; \\
r, & \text{if $\Omega _{r}=\emptyset .$ }%
\end{cases}%
\end{equation*}%
The optimal strategy for the player whose turn it is when observing the
variable $I_{r}$ is not to give up his turn before stage $\mathfrak{u}_{r}$
and to do so when he may starting from $\mathfrak{u}_{r}$. In addition, the following is true.

$\bullet$ If $\Omega_r=\emptyset$, then $\mathbf{V}_r <\frac{1}{2}$.

$\bullet$ If $\Omega_r\neq\emptyset $ and $p_{\mathfrak{u}_r}=\frac{1}{2}$, then $%
\mathbf{V}_r=\frac{1}{2}$.

$\bullet$ If $\Omega_r\neq\emptyset $ and $p_{\mathfrak{u}_r}>\frac{1}{2}$, then $%
\mathbf{V}_r>\frac{1}{2}$.
\end{prop}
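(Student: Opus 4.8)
The plan is to reduce the statement to the three preceding propositions by splitting off the tail of the sequence that lies strictly below $\frac{1}{2}$. Throughout write $u:=\mathfrak{u}_{r}$. I would first dispose of the case $\Omega_{r}=\emptyset$: here $p_{k}<\frac{1}{2}$ for every $k\in[r,n]$, so Proposition 1 applies verbatim on $[r,n]$ and gives $\mathbf{V}_{r}<\frac{1}{2}$, which is the first bullet. Moreover every $\mathbf{V}_{k+1}$ with $k\in[r,n-1]$ is then below $\frac{1}{2}$, so the decision rule of the dynamic program prescribes passing whenever $I_{k}=1$; since $u=r$ in this case, this is precisely the asserted strategy of giving up whenever possible from stage $\mathfrak{u}_{r}$.

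For $\Omega_{r}\neq\emptyset$ the structural observation is that maximality of $u=\max\Omega_{r}$ forces $p_{k}<\frac{1}{2}$ for all $k\in[u+1,n]$. Hence Proposition 1, applied to the tail interval $[u+1,n]$, yields $\mathbf{V}_{k}<\frac{1}{2}$ for every $k\in[u+1,n]$, in particular $\mathbf{V}_{u+1}<\frac{1}{2}$. I would then invoke Proposition 2 (if $p_{u}>\frac{1}{2}$) or Proposition 3 (if $p_{u}=\frac{1}{2}$) with the role of $r$ played by $u$: in either case their hypotheses are met, and they deliver the constancy $\mathbf{V}_{1}=\cdots=\mathbf{V}_{u-1}=\mathbf{V}_{u}$ together with the comparison of this common value to $\frac{1}{2}$. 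Because $r\leq u$, this gives $\mathbf{V}_{r}=\mathbf{V}_{u}$, hence the remaining two bullets: $\mathbf{V}_{r}=\frac{1}{2}$ when $p_{u}=\frac{1}{2}$, and $\mathbf{V}_{r}>\frac{1}{2}$ when $p_{u}>\frac{1}{2}$.

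The optimal strategy is then read off stage by stage from the sign of the successor value. For $k\in[r,u-1]$ the constancy gives $\mathbf{V}_{k+1}=\mathbf{V}_{u}\geq\frac{1}{2}$, so the decision rule says ``continue'', i.e. do not relinquish the turn before stage $u$. For $k\in[u,n-1]$ the tail estimate gives $\mathbf{V}_{k+1}<\frac{1}{2}$, so the rule says ``pass'' whenever $I_{k}=1$, i.e. give up the turn whenever possible from stage $u$ onward. This is exactly the claimed strategy.

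The one point requiring genuine care is the endpoint $u=n$, where the tail interval $[u+1,n]$ is empty so that Proposition 1 is vacuous there and Propositions 2 and 3 are invoked at their extreme index. I would treat it directly: $\mathbf{V}_{n}=p_{n}=p_{u}\geq\frac{1}{2}$, and the dynamic program propagates this constant value backward, so the value comparisons persist, while the ``pass from stage $u$'' clause is empty because no passing decision exists at the final stage. Beyond this bookkeeping, the only subtlety is to record that the transition stage $k=u$ is governed by $\mathbf{V}_{u+1}<\frac{1}{2}$ (hence ``pass'') even though $\mathbf{V}_{u}\geq\frac{1}{2}$; once this is observed, the whole statement is a routine assembly of the earlier propositions.
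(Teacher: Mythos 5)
Your proof is correct and takes exactly the route the paper intends: the paper gives no explicit proof, stating only that the result ``is followed without difficulty'' from the three preceding propositions, and your argument is precisely that assembly (Proposition 1 on the tail $[\mathfrak{u}_r+1,n]$, then Proposition 2 or 3 with $\mathfrak{u}_r$ in the role of $r$, then reading the strategy off the dynamic program). Your explicit treatment of the endpoint case $\mathfrak{u}_r=n$, where the cited propositions would otherwise reference the undefined value $\mathbf{V}_{n+1}$, is a detail the paper glosses over entirely.
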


%
%
%
%

Let us denote by $\mathfrak{u}$ the optimal threshold of the first player in
his first turn, $\mathfrak{u}:=\mathfrak{u}_{1}$ (the last Bernoulli event
with parameter $\geq \frac{1}{2}$). The
optimal strategy of the first player consists in continuing with his turn
until reaching the $\mathfrak{u}$-th event and thereafter giving up his turn
whenever possible. Obviously, player B will do the same in his optimal game
because, when his turn comes, he will be in the same situation as player A.
In short, we have the following result.

\begin{teor}
The optimal strategy for both players is to give up their turn when (and
only when) there are no random variables left to observe whose parameter is
greater than or equal to $\frac{1}{2}$.
\end{teor}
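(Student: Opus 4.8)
The plan is to read the optimal action directly off the recurrence and then translate the analytic condition $\mathbf{V}_{k+1}<\tfrac12$ into the combinatorial condition appearing in the statement, using the trichotomy already established in the summarizing proposition above (the one defining $\Omega_r$ and $\mathfrak{u}_r$).

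First I would recall the decision rule built into the dynamic program. When the player whose turn it is observes $I_k=1$, he chooses between keeping the turn (value $\mathbf{V}_{k+1}$) and passing it (value $1-\mathbf{V}_{k+1}$), i.e.\ he takes $\max\{\mathbf{V}_{k+1},1-\mathbf{V}_{k+1}\}$. Hence it is optimal to pass exactly when $\mathbf{V}_{k+1}<\tfrac12$ and to keep the turn when $\mathbf{V}_{k+1}\geq\tfrac12$. Crucially, this rule depends only on the stage index $k$, through $\mathbf{V}_{k+1}$, and not on the identity of the player: after a pass the opponent faces the very same subproblem on $I_{k+1},\dots,I_n$, which is exactly what makes the optimal strategy identical for A and B.

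Next I would invoke the summarizing proposition with $r=k+1$. Its three bullet points give precisely
\[
\mathbf{V}_{k+1}<\tfrac12 \iff \Omega_{k+1}=\emptyset \iff p_j<\tfrac12 \text{ for all } j\in[k+1,n],
\]
since $\Omega_{k+1}\neq\emptyset$ forces $p_{\mathfrak{u}_{k+1}}\geq\tfrac12$ and therefore $\mathbf{V}_{k+1}\geq\tfrac12$. In words, $\mathbf{V}_{k+1}<\tfrac12$ holds if and only if none of the variables still to be observed carries a parameter $\geq\tfrac12$.

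Combining the two steps closes the argument: upon seeing $I_k=1$ it is optimal to give up the turn iff $\mathbf{V}_{k+1}<\tfrac12$ iff no remaining variable $I_{k+1},\dots,I_n$ has parameter $\geq\tfrac12$, which is exactly the asserted rule (and the option to pass only ever arises when $I_k=1$). The single point requiring care, and the only mild obstacle, is the ``only when'' direction together with the boundary value $\mathbf{V}_{k+1}=\tfrac12$: there the convention encoded in the recurrence is to retain the turn, and this is consistent because $\mathbf{V}_{k+1}=\tfrac12$ corresponds to $\Omega_{k+1}\neq\emptyset$ (a remaining variable of parameter exactly $\tfrac12$ exists), so the player indeed does not give up. All of these equivalences are already delivered by the trichotomy, so no further computation is needed.
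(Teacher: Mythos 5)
Your proof is correct and takes essentially the same route as the paper: the paper likewise obtains the theorem by combining the decision rule of the dynamic program (pass iff $\mathbf{V}_{k+1}<\frac{1}{2}$) with the trichotomy of the summarizing proposition on $\Omega_r$ and $\mathfrak{u}_r$, and by observing that after a pass the opponent faces the identical subproblem, so the strategy is the same for both players. Your explicit treatment of the tie case $\mathbf{V}_{k+1}=\frac{1}{2}$ is, if anything, slightly more careful than the paper's informal handling of the ``only when'' direction.
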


In fact, when played optimally by both players, the game can be seen as a
game of solitaire played by player A assuming his opponent uses the optimal
strategy. Thus, the probability of player A winning, the optimal threshold
being $\mathfrak{u}$, is the probability that the number of $1's$
starting from the $\mathfrak{u}$ resulting from the random variants is odd;
that is to say:

\begin{equation*}
\mathbf{V}_{1}=P\left( \sum_{i=\mathfrak{u}}^{n}I_{i}=odd\right)
\end{equation*}%

%

This  allows establishing a (somewhat coarse) lower
bound for the probability of player A winning. Bear in mind that the win
probability in this game is greater than the probability of winning in the
LSP.

\begin{prop}
If $\sum_{i=1}^{n}\frac{p_{i}}{1-p_{i}}\geq 1$, then
\begin{equation*}
\mathbf{V}_{1}>\frac{1}{e}.
\end{equation*}
\end{prop}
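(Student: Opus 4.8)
The plan is to bound $\mathbf{V}_1$ from below by sandwiching it between the optimal Last-Success value and $1/e$. Since the text already records that the win probability of the game dominates that of the LSP, i.e. $\mathbf{V}_1 > \mathcal{V}(p_1,\dots,p_n)$, the whole matter reduces to proving the classical-looking estimate
\[
\sum_{i=1}^{n} r_i \ge 1 \quad\Longrightarrow\quad \mathcal{V}(p_1,\dots,p_n) > \tfrac{1}{e},
\]
after which $\mathbf{V}_1 > \mathcal{V} > 1/e$ is immediate. (Here $r_i = p_i/(1-p_i)$, so the hypothesis $\sum_i p_i/(1-p_i)\ge 1$ is exactly $\sum_i r_i\ge 1$.)

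For completeness I would first re-derive the domination $\mathbf{V}_1\ge\mathcal{V}$ in the present notation. Set $f(k):=P\!\left(\sum_{i=k}^{n} I_i \text{ is odd}\right)$, so that $\mathbf{V}_1=f(\mathfrak{u})$. The one-step recurrence gives $f(k)=p_k+(1-2p_k)f(k+1)$, whence $f(k)-f(k+1)=p_k\bigl(1-2f(k+1)\bigr)$; thus $f$ is non-increasing on any stretch where $f(\cdot+1)\le\tfrac12$. Every index beyond $\mathfrak{u}$ has $p_j<\tfrac12$, so the odd-parity probability over $[k+1,n]$ is $\le\tfrac12$ for $k\ge\mathfrak{u}$, and hence $f(\mathfrak{u})\ge f(\mathbf{s})$ provided $\mathfrak{u}\le\mathbf{s}$. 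The latter holds because either $\mathfrak{u}=1$ or $p_{\mathfrak{u}}\ge\tfrac12$, i.e. $r_{\mathfrak{u}}\ge1$, which forces $\sum_{j=\mathfrak{u}}^{n}r_j\ge1$ and therefore $\mathbf{s}\ge\mathfrak{u}$. Finally $f(\mathbf{s})\ge P(\text{exactly one }1\text{ in }[\mathbf{s},n])=\mathcal{V}$, because an odd number of successes in particular includes a single success.

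The substance is the inequality $\mathcal{V}>1/e$. Writing $R:=\sum_{j=\mathbf{s}}^{n}r_j\ge1$ and $b:=\sum_{j=\mathbf{s}+1}^{n}r_j$, the maximality in the definition of $\mathbf{s}$ gives $b<1$. Using $q_j=1/(1+r_j)\ge e^{-r_j}$ and isolating the leading factor,
\[
\mathcal{V}=\frac{R}{1+r_{\mathbf{s}}}\prod_{j=\mathbf{s}+1}^{n}\frac{1}{1+r_j}\ \ge\ \frac{R}{1+r_{\mathbf{s}}}\,e^{-b}.
\]
Now put $a:=r_{\mathbf{s}}=R-b$, so $a\ge 1-b$; since $b<1$, the map $a\mapsto (a+b)/(1+a)$ is increasing, so the right-hand side is minimized at $a=1-b$, yielding $\mathcal{V}\ge e^{-b}/(2-b)$. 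The function $b\mapsto e^{-b}/(2-b)$ is decreasing on $[0,1)$ with infimum $1/e$ attained only in the limit $b\to1^{-}$, so $\mathcal{V}>1/e$, as required.

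I expect the genuine obstacle to be this final estimate rather than the domination step: the naive bound $\mathcal{V}\ge R\,e^{-R}$ falls below $1/e$ as soon as $R>1$, so one must separate the single large-odds summand $r_{\mathbf{s}}$ and use $b<1$ to recover the threshold — precisely the point at which the definition of the odds index $\mathbf{s}$ is indispensable. (One may instead invoke the classical $1/e$ lower bound of the Odds-Theorem directly.)
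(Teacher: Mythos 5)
Your proposal is correct, and at the top level it follows exactly the paper's route: the paper's entire proof is the observation that $\mathbf{V}_1$ dominates the optimal LSP win probability $\mathcal{V}$ together with a citation of Bruss (2003) for the bound $\mathcal{V}>1/e$ when $\sum_i r_i\ge 1$. The difference is that you make both ingredients self-contained, and both of your arguments check out. Your domination step is a genuine addition: the paper merely asserts, without proof, that the game's win probability exceeds the LSP value, whereas you derive it cleanly from the parity recurrence $f(k)=p_k+(1-2p_k)f(k+1)$, the fact that $f(j)\le\tfrac12$ past the threshold $\mathfrak{u}$, the inequality $\mathbf{s}\ge\mathfrak{u}$ (since $r_{\mathfrak{u}}\ge 1$ when $p_{\mathfrak{u}}\ge\tfrac12$), and the identity $\mathcal{V}=P(\text{exactly one success in }[\mathbf{s},n])\le P(\text{odd number of successes in }[\mathbf{s},n])$. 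Your second step reproves Bruss's bound: separating $a=r_{\mathbf{s}}$ from $b=\sum_{j>\mathbf{s}}r_j<1$, using $1/(1+r)\ge e^{-r}$, and minimizing $(a+b)e^{-b}/(1+a)$ at $a=1-b$ to get $\mathcal{V}\ge e^{-b}/(2-b)>1/e$ is a correct rendering of the argument the paper outsources to the reference. What the paper's version buys is brevity; what yours buys is a complete proof within the paper, including the one point (domination of the LSP value) that the paper leaves entirely to the reader and which, as your parity argument shows, is not purely trivial since it requires comparing the two thresholds $\mathfrak{u}$ and $\mathbf{s}$.
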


\begin{proof}
It suffices to keep in mind that the probability of winning in the LSP under
these conditions is greater than $1/e$ (see \cite{BR2}).
\end{proof}

\section{All the random variables have the same parameter}

In this section, we study the particular case that all the Bernoulli random
variables have the same parameter.

\begin{prop}
If $p_{i}=p$ for all $i=1,...,n$, then the probability of player A winning  is strictly increasing with $n$ always below
its limit as $n$ tends to infinity

\begin{equation*}
\mathbf{V}_1 =\frac{1 - {\left( 1 - 2\,p \right) }^ n}{2}<\frac{1}{2}.
\end{equation*}

\end{prop}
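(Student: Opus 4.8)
The plan is to exploit the hypothesis $p<\frac12$ (which is also what the stated strict bound $\mathbf{V}_1<\frac12$ requires) in order to collapse the $\max$ in the dynamic-programming recurrence, reducing it to a linear first-order recurrence solvable in closed form. First I would invoke the proposition proved above guaranteeing that $\mathbf{V}_k<\frac12$ whenever every parameter in $[1,n]$ lies below $\frac12$: since here all parameters equal $p<\frac12$, we get $\mathbf{V}_{k+1}<\frac12$ for every $k$, whence $\max\{\mathbf{V}_{k+1},1-\mathbf{V}_{k+1}\}=1-\mathbf{V}_{k+1}$ at every stage. Substituting this into $\mathbf{V}_k=p_k\max\{\mathbf{V}_{k+1},1-\mathbf{V}_{k+1}\}+(1-p_k)\mathbf{V}_{k+1}$ with $p_k=p$ yields the linear recurrence $\mathbf{V}_k=p+(1-2p)\mathbf{V}_{k+1}$, subject to the terminal condition $\mathbf{V}_n=p$.

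Next I would solve this recurrence by centering at its fixed point $\frac12$. Writing $c_k:=\mathbf{V}_k-\frac12$, the recurrence becomes the purely geometric relation $c_k=(1-2p)\,c_{k+1}$, with $c_n=p-\frac12=-\frac{1-2p}{2}$. Iterating downward from $k=n$ to $k=1$ gives $c_1=(1-2p)^{n-1}c_n=-\frac{(1-2p)^n}{2}$, and therefore $\mathbf{V}_1=\frac12+c_1=\frac{1-(1-2p)^n}{2}$, which is exactly the claimed formula. As an independent cross-check one may note that, since $\mathfrak{u}=1$ when all parameters lie below $\frac12$, the parity description $\mathbf{V}_1=P\bigl(\sum_{i=1}^n I_i=\text{odd}\bigr)$ yields the same value, because $E\bigl[(-1)^{\sum_i I_i}\bigr]=(1-2p)^n$ equals $1-2\,\mathbf{V}_1$.

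Finally, the qualitative assertions I would simply read off the closed form. Because $0<p<\frac12$ we have $0<1-2p<1$, so the sequence $(1-2p)^n$ is strictly positive and strictly decreasing to $0$ as $n\to\infty$; consequently $\mathbf{V}_1=\frac{1-(1-2p)^n}{2}$ is strictly increasing in $n$, remains strictly below $\frac12$ for every finite $n$ (as $(1-2p)^n>0$), and converges to $\frac12$. I do not anticipate a genuine obstacle here: the only point needing care is the reduction in the first step, namely verifying that the bound $\mathbf{V}_{k+1}<\frac12$ holds \emph{uniformly} in $k$ so that the $\max$ really equals $1-\mathbf{V}_{k+1}$ at every stage. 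Once that reduction is secured, the remaining computation and the monotonicity argument are entirely routine.
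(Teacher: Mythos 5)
Your proof is correct, but it takes a genuinely different route from the paper. The paper never touches the recurrence in this proof: it uses the parity characterization $\mathbf{V}_1 = P\left(\sum_{i=1}^n I_i = \text{odd}\right)$ (valid because the threshold is $\mathfrak{u}=1$ when all parameters are below $\tfrac12$) and then evaluates the binomial sums $\sum_i \binom{n}{1+2i}p^{1+2i}(1-p)^{n-1-2i}$ separately for $n$ even and $n$ odd to obtain the closed form --- essentially the computation you relegate to your ``cross-check.'' You instead collapse the $\max$ in the dynamic program using the first proposition of the paper ($\mathbf{V}_k<\tfrac12$ uniformly when all parameters are below $\tfrac12$), which linearizes the recurrence to $\mathbf{V}_k = p + (1-2p)\mathbf{V}_{k+1}$, and you solve it by centering at the fixed point $\tfrac12$. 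Your route is more self-contained and arguably cleaner: it avoids the combinatorial identity entirely, and the monotonicity and the bound $\mathbf{V}_1<\tfrac12$ fall out transparently from $0<(1-2p)^n$ decreasing. The paper's route stays closer to the game-theoretic interpretation (counting the parity of successes) that it reuses in the subsequent propositions. One shared caveat: both arguments implicitly need $0<p<\tfrac12$, since for $p\geq\tfrac12$ the formula fails (e.g.\ $n=2$, $p=0.6$ gives $\mathbf{V}_1=0.6$, not $2p(1-p)=0.48$) and for $p=0$ the sequence is constant rather than strictly increasing; you at least flag the upper restriction explicitly, which the paper's statement does not.
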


\begin{proof}

If $n$ is even, we have

\begin{equation*}
\mathbf{V}_1=\sum_{i=0}^{\frac{n}{2}-1}{\left( 1 -p \right) }^ {-1 -
2\,i + n}\, p ^{ 1 + 2\,i}\,  {\binom{n}{1+2i}}=\frac{1 - {\left( 1 - 2\,p
\right) }^ n}{2}.
\end{equation*}

Similarly, if n is odd, we have
\begin{equation*}
\mathbf{V}_1=\sum_{i=0}^{\frac{n-1}{2}}{\left( 1-p\right) }%
^{-1-2\,i+n}\,p^{1+2\,i}\,{\binom{n}{1+2i}}=\frac{1-{\left( 1-2\,p\right) }%
^{n}}{2}.
\end{equation*}
\end{proof}

\begin{prop}
If we have $n$ Bernoulli random variables with $p_{i}=\frac{1}{n}$, the
probability of player A winning is decreasing and is always greater that its
limit as $n$ tends to infinity, namely
\begin{equation*}
\frac{1}{2}-\frac{1}{2\,e^{2}}=0.4323323...
\end{equation*}
\end{prop}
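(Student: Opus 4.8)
The plan is to read off the closed form already established in the previous proposition. Since all parameters equal $p=1/n$, the formula $\mathbf{V}_1=\frac{1-(1-2p)^n}{2}$ specializes to
$$\mathbf{V}_1=\frac{1-\left(1-\tfrac{2}{n}\right)^n}{2}.$$
Writing $g(n):=\left(1-\tfrac{2}{n}\right)^n$, the whole statement reduces to two facts about this single sequence: that $g$ is strictly increasing in $n$, and that $g(n)\to e^{-2}$ with $g(n)<e^{-2}$ for every $n$. Indeed, if $g$ is strictly increasing then $\mathbf{V}_1=\frac{1-g(n)}{2}$ is strictly decreasing; and if $g(n)<e^{-2}$ for all $n$ then $\mathbf{V}_1>\frac{1-e^{-2}}{2}=\frac{1}{2}-\frac{1}{2e^2}$, which is precisely the announced limit.

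The substantive work is the monotonicity of $g$. I would prove it by passing to the continuous function $h(x):=x\ln\!\left(1-\tfrac{2}{x}\right)=x\bigl(\ln(x-2)-\ln x\bigr)$ on $(2,\infty)$, so that $g(n)=e^{h(n)}$ for $n\geq 3$. Differentiating gives
$$h'(x)=\ln\frac{x-2}{x}+\frac{2}{x-2}.$$
Setting $t:=\frac{2}{x-2}>0$, one has $\frac{x}{x-2}=1+t$, and therefore $h'(x)=t-\ln(1+t)$. Since $\ln(1+t)<t$ for every $t>0$, we get $h'(x)>0$ on $(2,\infty)$, so $h$ is strictly increasing and hence $g(n)=e^{h(n)}$ is strictly increasing for $n\geq 3$. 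The two remaining values $g(1)=-1$ and $g(2)=0$ lie below every later (positive) term, so $g$ is strictly increasing over all $n\geq 1$.

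Finally, since $g$ is strictly increasing and $\lim_{n\to\infty}g(n)=\lim_{n\to\infty}\left(1-\tfrac{2}{n}\right)^n=e^{-2}$, every term obeys $g(n)<e^{-2}$. Substituting back, $\mathbf{V}_1=\frac{1-g(n)}{2}$ is strictly decreasing and bounded below by its limit $\frac{1-e^{-2}}{2}=\frac{1}{2}-\frac{1}{2e^2}=0.4323323\ldots$, as claimed. The only delicate point is the strict monotonicity of $\left(1-\tfrac{2}{n}\right)^n$; once the elementary estimate $\ln(1+t)<t$ is invoked as above, everything else is immediate.
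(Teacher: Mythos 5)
Your proof is correct, and it is in fact more complete than the paper's own argument. The paper re-derives the closed form for $p_i=\frac{1}{n}$ from scratch: it observes that for $n\geq 3$ both players pass whenever possible, so player A wins exactly when the total number of $1$'s is odd, computes the resulting binomial sums separately for even and odd $n$ (handling $n=1$ and $n=2$ directly), and then simply \emph{states} the limit $\frac{1}{2}-\frac{1}{2e^2}$; it never actually proves that the sequence is decreasing nor that every term exceeds the limit. You instead reuse the closed form $\mathbf{V}_1=\frac{1-(1-2p)^n}{2}$ from the preceding proposition and concentrate on precisely the analytic content the paper leaves implicit: the strict monotonicity of $g(n)=\left(1-\frac{2}{n}\right)^n$, which you establish cleanly via $h'(x)=t-\ln(1+t)>0$ after the substitution $t=\frac{2}{x-2}$, together with the observation that a strictly increasing sequence stays strictly below its limit. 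That is exactly the argument needed to justify the ``decreasing, always above the limit'' claims that the paper merely asserts. One small caveat: the preceding proposition's formula rests on the parity argument, which presupposes $p<\frac{1}{2}$; for $n=1$ (where $p=1$) and $n=2$ (where $p=\frac{1}{2}$) you should, as the paper does, verify directly that $\mathbf{V}_1=1$ and $\mathbf{V}_1=\frac{1}{2}$ --- values which happen to agree with the formula --- rather than cite that proposition outside its implicit range; your explicit treatment of $g(1)=-1$ and $g(2)=0$ then makes the monotonicity claim valid for all $n$.
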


\begin{proof}
If $n=1$ then $\mathbf{V}_1=1$. If $n=2$ then $V_2=\frac{1}{2}$. If $n>3$, the optimal
strategy for both players is to give up their turn whenever possible. Hence,
player A will win if the number of $1's$ resulting from the random
variables is odd.

If $n$ is even
\begin{equation*}
\mathbf{V}_{1}=\sum_{i=0}^{\frac{n}{2}-1}{\left( 1-\frac{1}{n}\right) }%
^{-1-2\,i+n}\,\left( \frac{1}{n}\right) ^{1+2\,i}\,{\binom{n}{1+2i}}=
\end{equation*}

\begin{equation*}
=\frac{{\left( \frac{-1 + n}{n} \right) }^n\, \left( -{\left( \frac{-2 + n} {%
-1 + n} \right) }^n + {\left( \frac{n}{-1 + n} \right) }^n \right) }{2}
\end{equation*}

Similarly, if $n$ is odd
\begin{equation*}
\mathbf{V}_{1}=\sum_{i=0}^{\frac{n-1}{2}}{\left( 1-\frac{1}{n}\right) }%
^{-1-2\,i+n}\,\left( \frac{1}{n}\right) ^{1+2\,i}\,{\binom{n}{1+2i}}=
\end{equation*}

\begin{equation*}
=\frac{{\left( \frac{-1 + n}{n} \right) }^n\, \left( -{\left( \frac{-2 + n} {%
-1 + n} \right) }^n + {\left( \frac{n}{-1 + n} \right) }^n \right) }{2}
\end{equation*}

\begin{equation*}
\lim_{n\rightarrow\infty}\mathbf{V}_1=\frac{1}{2} - \frac{1}{2\,e^2}%
=0.4323323...
\end{equation*}
\end{proof}

\begin{prop}
If we have $n$ Bernoulli random variables with $p_{i}\geq \frac{1}{n}$, the
probability of player A winning is greater than
\begin{equation*}
\frac{1}{2}-\frac{1}{2\,e^{2}}=0.4323323...
\end{equation*}
\end{prop}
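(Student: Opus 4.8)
The plan is to reduce to the case in which every $p_i < \tfrac{1}{2}$ and then to evaluate $\mathbf{V}_1$ in closed form. First I would dispose of the easy case: if at least one $p_k \geq \tfrac{1}{2}$, then $\Omega_1 \neq \emptyset$ and $p_{\mathfrak{u}} \geq \tfrac{1}{2}$, so by the optimal-strategy proposition we have $\mathbf{V}_1 \geq \tfrac{1}{2} > \tfrac{1}{2} - \tfrac{1}{2e^2}$ and there is nothing further to prove. Hence I may assume $p_i < \tfrac{1}{2}$ for every $i$; note that together with $p_i \geq \tfrac{1}{n}$ this forces $\tfrac{1}{n} < \tfrac{1}{2}$, i.e. $n \geq 3$, so in particular $1 - \tfrac{2}{n} > 0$.

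In this remaining case $\mathfrak{u} = 1$, so by the solitaire description of the game $\mathbf{V}_1 = P\!\left(\sum_{i=1}^n I_i = odd\right)$. I would then invoke the standard parity identity: writing $X = \sum_{i=1}^n I_i$ and computing
$$E\!\left[(-1)^X\right] = \prod_{i=1}^n E\!\left[(-1)^{I_i}\right] = \prod_{i=1}^n (1-2p_i),$$
together with $E\!\left[(-1)^X\right] = P(X \text{ even}) - P(X \text{ odd}) = 1 - 2\,P(X \text{ odd})$, which yields
$$\mathbf{V}_1 = \frac{1 - \prod_{i=1}^n (1-2p_i)}{2}.$$
The claimed inequality $\mathbf{V}_1 > \tfrac{1}{2} - \tfrac{1}{2e^2}$ is therefore equivalent to the single product bound $\prod_{i=1}^n (1-2p_i) < e^{-2}$.

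To establish this bound I would argue factor by factor. Since $0 < p_i < \tfrac{1}{2}$ each factor satisfies $0 < 1 - 2p_i < 1$, and since $p_i \geq \tfrac{1}{n}$ we have $1 - 2p_i \leq 1 - \tfrac{2}{n}$ with $1 - \tfrac{2}{n} \in (0,1)$; multiplying the $n$ nonnegative factors gives $\prod_{i=1}^n (1-2p_i) \leq \left(1 - \tfrac{2}{n}\right)^n$. It then remains to check the elementary inequality $\left(1 - \tfrac{2}{n}\right)^n < e^{-2}$, which follows from $\ln(1-x) < -x$ for $x \in (0,1)$ applied with $x = \tfrac{2}{n}$, giving $n\ln\!\left(1-\tfrac{2}{n}\right) < -2$. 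Chaining these estimates yields $\prod_{i=1}^n (1-2p_i) < e^{-2}$, and substituting back into the formula for $\mathbf{V}_1$ gives the result.

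The argument is short, and the only real difficulty is conceptual rather than computational: one must recognize that the extremal configuration $p_i \equiv \tfrac{1}{n}$ studied in the previous proposition is exactly what maximizes $\prod_i (1-2p_i)$ subject to $p_i \geq \tfrac{1}{n}$, so that the whole statement collapses to the scalar bound $\left(1-\tfrac{2}{n}\right)^n < e^{-2}$. Once the parity formula for $\mathbf{V}_1$ is in hand and the case with some $p_i \geq \tfrac{1}{2}$ is separated off, everything else is routine monotonicity.
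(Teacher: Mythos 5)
Your proof is correct, but it follows a genuinely different route from the paper's. The paper splits off the case where some $p_i\geq\frac{1}{2}$ exactly as you do, but in the remaining case it never computes $\mathbf{V}_1$: it starts from the auxiliary game with all parameters equal to $\frac{1}{n}$, whose value exceeds $\frac{1}{2}-\frac{1}{2e^2}$ by the preceding proposition, and then raises the parameters one at a time up to the actual values $p_i$, invoking the monotonicity result (Lemma \ref{aumen}, forward-referenced in the paper) to conclude that each modification only increases the win probability. You instead evaluate the value in closed form via the parity identity $\mathbf{V}_1=P\left(\sum_{i=1}^n I_i\ \text{odd}\right)=\frac{1}{2}\left(1-\prod_{i=1}^n(1-2p_i)\right)$ --- a generalization of the constant-parameter formula $\frac{1-(1-2p)^n}{2}$ that the paper obtains by summing binomial terms --- and then bound the product factor by factor, $\prod_{i=1}^n(1-2p_i)\leq\left(1-\frac{2}{n}\right)^n<e^{-2}$, using $\ln(1-x)<-x$. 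Both arguments rest on the same foundation, namely the solitaire description of optimal play when all $p_i<\frac{1}{2}$, and both implicitly identify $p_i\equiv\frac{1}{n}$ as the extremal configuration. Your route buys self-containedness and, arguably, more rigor: it needs neither Lemma \ref{aumen} nor the preceding proposition's claim that the all-$\frac{1}{n}$ value is decreasing and always above its limit (a claim the paper asserts after a computation but does not fully prove), replacing the limit argument with a strict finite-$n$ inequality. The paper's route buys economy and reuse: the monotonicity lemma has independent interest and is employed again in the very next proposition, where your closed-form approach would no longer apply directly since only $m$ of the $n$ parameters are bounded below there.
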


\begin{proof}
If $p_{i}\geq\frac{1}{2}$ for some $i$, then $V_1\geq\frac{1}{2}$. Otherwise, think of the auxiliar game with all the parameters equal to $1/n$ in which
the probability is greater than $\frac{1}{2}-\frac{1}{2\,e^{2}}$. Now, there is no more to considering successive modifications of this game, as in Lemma \ref{aumen} (see below), with which the win probability increases, until reaching the game considered.
\end{proof}

\begin{prop}
If we have $n$ Bernoulli random variables, of which there are $m$ with $%
p_{i}\geq \frac{1}{m}$, the probability of player A winning is greater than
\begin{equation*}
\frac{1}{2}-\frac{1}{2\,e^{2}}=0.4323323...
\end{equation*}
\end{prop}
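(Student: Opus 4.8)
The plan is to split on whether any parameter reaches $\frac12$, and in the remaining case to evaluate $\mathbf{V}_1$ exactly through a parity identity and then estimate the resulting product. First I would dispose of the easy case: if $p_i\ge\frac12$ for some $i$, then the threshold $\mathfrak{u}=\mathfrak{u}_1$ (the last index with parameter $\ge\frac12$) satisfies $p_{\mathfrak{u}}\ge\frac12$, so by the second and third bullets of the Proposition introducing $\Omega_r$ and $\mathfrak{u}_r$ we already have $\mathbf{V}_1\ge\frac12>\frac12-\frac1{2e^2}$, and there is nothing to prove. Hence I may assume $p_i<\frac12$ for every $i$. Then $\Omega_1=\emptyset$, so $\mathfrak{u}=1$, and the identity $\mathbf{V}_1=P\!\left(\sum_{i=\mathfrak{u}}^{n}I_i=\text{odd}\right)$ established just after the optimal-strategy Theorem applies to the whole sequence, giving $\mathbf{V}_1=P\!\left(\sum_{i=1}^{n}I_i=\text{odd}\right)$.

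Second, I would convert this parity probability into a product. Using independence,
\[
P\Bigl(\sum_{i=1}^n I_i \text{ even}\Bigr)-P\Bigl(\sum_{i=1}^n I_i \text{ odd}\Bigr)=E\Bigl[(-1)^{\sum_i I_i}\Bigr]=\prod_{i=1}^n (1-2p_i),
\]
and since the two probabilities sum to $1$,
\[
\mathbf{V}_1=\frac{1-\prod_{i=1}^n(1-2p_i)}{2}.
\]
Thus the target inequality $\mathbf{V}_1>\frac12-\frac1{2e^2}$ is equivalent to the single clean inequality $\prod_{i=1}^n(1-2p_i)<e^{-2}$, and in this case every factor lies in $(0,1]$ because $0\le p_i<\frac12$.

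Third, I would exploit the $m$ distinguished variables. Let $S$ be the index set of the $m$ variables with $p_i\ge\frac1m$; in the present case they satisfy $\frac1m\le p_i<\frac12$, which already forces $m\ge 3$ (otherwise $\frac1m\ge\frac12$ would contradict $p_i<\frac12$). For $i\in S$ we have $1-2p_i\le 1-\frac2m$, while every remaining factor is at most $1$, so
\[
\prod_{i=1}^n(1-2p_i)\le\prod_{i\in S}(1-2p_i)\le\Bigl(1-\tfrac{2}{m}\Bigr)^{m}.
\]
It then remains to verify the elementary fact that $\bigl(1-\tfrac2m\bigr)^m<e^{-2}$ for every finite $m\ge 3$, which holds because the sequence $m\mapsto(1-\tfrac2m)^m$ is strictly increasing and converges to $e^{-2}$ from below (equivalently, $m\log(1-\tfrac2m)<-2$). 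Combining the three steps yields $\prod_{i=1}^n(1-2p_i)<e^{-2}$ and hence $\mathbf{V}_1>\frac12-\frac1{2e^2}$.

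The only genuine obstacle is the final monotonicity estimate together with the bookkeeping that, in the main case, the hypothesis really does force $m\ge 3$ so that the base $1-\frac2m$ is positive; the rest is a direct specialization of results already proved. I note in passing that, because the product bound above controls the $n-m$ unconstrained variables automatically, this argument does not require the monotonicity Lemma~\ref{aumen}.
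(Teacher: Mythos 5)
Your proof is correct, but it takes a genuinely different route from the paper's. The paper argues by reduction: it first claims that when all $p_i<\frac12$, excluding variables can only decrease player A's win probability (an exclusion analogue of Lemma~\ref{aumen}), then discards the $n-m$ unconstrained variables to land in the setting of the previous proposition ($m$ variables, each with parameter at least $\frac1m$), which in turn rests on Lemma~\ref{aumen} and the exact computation for equal parameters $\frac1m$. You instead bypass this whole chain: after the same easy case split on whether some $p_i\ge\frac12$, you use the paper's parity identity together with independence to get the closed form
\begin{equation*}
\mathbf{V}_1=\frac{1-\prod_{i=1}^n(1-2p_i)}{2},
\end{equation*}
which reduces the claim to the single inequality $\prod_{i=1}^n(1-2p_i)<e^{-2}$, and this follows from $1-2p_i\le 1-\frac2m$ on the distinguished indices, the remaining factors being at most $1$, and the standard monotone convergence $(1-\frac2m)^m\nearrow e^{-2}$. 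Your version is more self-contained and quantitatively transparent: the product formula generalizes the paper's equal-parameter expression $\frac{1-(1-2p)^n}{2}$ to arbitrary parameters, and it makes explicit why the bound $\frac12-\frac1{2e^2}$ is the natural constant, without needing any monotonicity-of-the-game lemma. What the paper's approach buys is economy within its own structure (it reuses results already proved and avoids introducing the expectation identity $E[(-1)^{\sum_i I_i}]=\prod_i(1-2p_i)$), but it leans on an exclusion-monotonicity claim that is only asserted (``it can easily be seen''), whereas every step of yours is verifiable on the spot; your observation that the case hypothesis forces $m\ge3$, so the base $1-\frac2m$ stays positive, is exactly the right bookkeeping.
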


\begin{proof}
It can easily be seen that if $p_{i}<\frac{1}{2}$ for all $i$, then the probability
of player A winning is greater than the probability that he would have in
the game resulting from  excluding  some random variable.
Consequently, it suffices to observe that the value,  $\frac{1}{2%
}-\frac{1}{2\,e^{2}}$, is exceeded in the auxiliary game resulting from
 excluding  some random variables. In fact, if we have $m$ variables
with $p_{i}<1/m$ for all of these variables, considering the game in which
the other variables are  excluded, then we are able to use the
previous proposition.
\end{proof}

\begin{lem}
\label{aumen} Let us consider the game with parameters $p_{i}<\frac{1}{2}$
and denote by $\overline{\mathbf{V}}_{i}$ the probability of a player
winning when it is his turn after observing the variable $I_{i}$ in the
resulting auxiliary game when changing $p_{k}$ to $\overline{p}_{k}>p_{k}$%
. Hence,

\begin{equation*}
\text{For all }i\in [k+1,n] , \overline{\mathbf{V}}_i=\mathbf{V}_i
\end{equation*}

\begin{equation*}
\text{For all }i\in [1,k] , \overline{\mathbf{V}}_i>\mathbf{V}_i
\end{equation*}

In other words, if we increase the value of the parameter of one of the
Bernoulli random variables in a game, then the player's probability of
winning on his turn increases.
\end{lem}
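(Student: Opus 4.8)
The plan is to exploit the hypothesis that every parameter is below $\frac{1}{2}$: by Proposition \ref{main} this forces $\mathbf{V}_i<\frac12$ for all $i$, so the $\max$ in the dynamic program always selects the branch $1-\mathbf{V}_{i+1}$. The recurrence then linearizes to
\begin{equation*}
\mathbf{V}_i=p_i(1-\mathbf{V}_{i+1})+(1-p_i)\mathbf{V}_{i+1}=(1-2p_i)\mathbf{V}_{i+1}+p_i .
\end{equation*}
Assuming, as in the intended application, that $\overline p_k<\frac12$ too, the auxiliary game has all parameters below $\frac12$, so Proposition \ref{main} applies to it as well, keeps $\overline{\mathbf{V}}_i<\frac12$, and the same linearized recurrence governs both games throughout. (If instead $\overline p_k\ge\frac12$, then $\overline{\mathbf{V}}_k\ge\frac12>\mathbf{V}_k$ and the later propositions give $\overline{\mathbf{V}}_i=\overline{\mathbf{V}}_k>\frac12>\mathbf{V}_i$ for $i<k$, so that case is even easier and I would dispose of it in one line.)

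The first assertion is immediate from the structure of the backward recurrence: $\mathbf{V}_i$ is a function of $p_i,p_{i+1},\dots,p_n$ alone. For every $i\in[k+1,n]$ we have $k<i$, so the altered parameter $p_k$ never enters the computation of $\mathbf{V}_i$, whence $\overline{\mathbf{V}}_i=\mathbf{V}_i$.

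For the second assertion I would argue by backward induction, beginning at $i=k$. By the first assertion $\overline{\mathbf{V}}_{k+1}=\mathbf{V}_{k+1}<\frac12$, so the linearized recurrence gives $\overline{\mathbf{V}}_k-\mathbf{V}_k=(\overline p_k-p_k)(1-2\mathbf{V}_{k+1})>0$, since $\overline p_k>p_k$ and $1-2\mathbf{V}_{k+1}>0$; this is the base case $\overline{\mathbf{V}}_k>\mathbf{V}_k$. For the inductive step with $i<k$, observe that the map $v\mapsto(1-2p_i)v+p_i$ is strictly increasing because $p_i<\frac12$ gives $1-2p_i>0$; applying it to the inductive hypothesis $\overline{\mathbf{V}}_{i+1}>\mathbf{V}_{i+1}$ yields $\overline{\mathbf{V}}_i>\mathbf{V}_i$ at once.

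I expect the only delicate point to be verifying that the $\max$ selects the \emph{same} branch in both games at every stage; this is precisely what Proposition \ref{main} guarantees once all parameters (including $\overline p_k$) are known to stay below $\frac12$. It is this uniformity of regime, rather than any quantitative estimate, that drives the argument: the per-stage update is an affine map with positive slope $1-2p_i$, and a composition of increasing affine maps preserves the strict inequality introduced at stage $k$.
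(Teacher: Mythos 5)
Your proof is correct and follows essentially the same route as the paper's: the identity $\overline{\mathbf{V}}_i=\mathbf{V}_i$ for $i>k$ because later stages never see $p_k$, then backward induction with base case $\overline{\mathbf{V}}_k-\mathbf{V}_k=(\overline{p}_k-p_k)(1-2\mathbf{V}_{k+1})>0$ and an inductive step that is just your ``increasing affine map'' phrasing of the paper's computation $(2p_i-1)(\mathbf{V}_{i+1}-\overline{\mathbf{V}}_{i+1})>0$. If anything you are more careful than the paper, which silently assumes the linearized recurrence (i.e.\ $\overline{p}_k<\frac{1}{2}$, so the $\max$ picks $1-\overline{\mathbf{V}}_{i+1}$ in the auxiliary game as well); your one-line disposal of the case $\overline{p}_k\geq\frac{1}{2}$ via the later propositions closes that small gap.
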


\begin{proof}
Let us recall that $\mathbf{V}_{i}$ and $\overline{\mathbf{V}}_{i}$
respectively denote the player's probability of winning on his turn at stage
$i$ in the original game and in the auxiliary game.

$\bullet $ For all $i>k$, it is evident that $\mathbf{V}_{i}=\overline{%
\mathbf{V}}_{i}$ as we are in a subsequent stage to the modified variable
and the process \textquotedblleft has no memory\textquotedblright\ and
therefore does not affect.

$\bullet $ For all $i\leq k$, we will proceed by induction backwards. Let us
first see that it is true for $i=k$.

\begin{equation*}
\mathbf{V}_k=p_k (1-\mathbf{V}_{k+1})+(1-p_k) \mathbf{V}_{k+1}
\end{equation*}

\begin{equation*}
\overline{\mathbf{V}}_k=\overline{p}_k (1-\mathbf{V}_{k+1})+(1-\overline{p}%
_k) \mathbf{V}_{k+1}
\end{equation*}

\begin{equation*}
\overline{\mathbf{V}}_k-\mathbf{V}_k=(\overline{p}_k-p_k)(1-\mathbf{V}%
_{k+1})-(\overline{p}_k-p_k) \mathbf{V}_{k+1}=(\overline{p}_k-p_k)(1-2%
\mathbf{V}_{k+1})>0
\end{equation*}

We now assume that the proposal is fulfilled for $i+1$ and shall prove that
it is fulfilled for $i$

\begin{equation*}
\mathbf{V}_i=p_i (1-\mathbf{V}_{i+1})+(1-p_i) \mathbf{V}_{i+1}
\end{equation*}

\begin{equation*}
\overline{\mathbf{V}}_i=p_i (1-\overline{\mathbf{V}}_{i+1})+(1-p_i)
\overline{\mathbf{V}}_{i+1}
\end{equation*}

\begin{equation*}
\overline{\mathbf{V}}_i-\mathbf{V}_i=p_i(\mathbf{V}_{i+1}-\overline{\mathbf{V%
}}_{i+1})-(1-p_i)(\mathbf{V}_{i+1}-  \overline{\mathbf{V}}_{i+1})=(2p_i-1)(%
\mathbf{V}_{i+1}-\overline{\mathbf{V}}_{i+1})>0
\end{equation*}
\end{proof}

\subsection{A variant: If there have been no $1^{\prime }s$, the game is
repeated}

We have seen that the game is advantageous for player A if and only if some
parameter is greater than $\frac{1}{2}$. The reason that the game is disadvantageous
for player A is related to the fact that he can lose because the results of
all the random variables are $0$. In fact, if any of the variables
is worth $1$, then the probability of the player winning by giving up his
turn is greater than $\frac{1}{2}$. The following result shows that, if the rule of
repeating the game is introduced and if there have been no $1's$,
then the game is very advantageous for player A.

\begin{prop}
If $p_{i}=1/n$ for all $i$ and considering the rule that the game is
repeated in the case of $I_{i}=0$ for all $i$, the probability of player A
winning is
\begin{equation*}
\mathcal{V}(n):=\frac{{\left( -2+n\right) }^{n}-{\left( -1+n\right) }^{n}\,{%
\left( \frac{n}{-1+n}\right) }^{n}}{2\,{\left( -1+n\right) }^{n}-2\,n^{n}}.
\end{equation*}%
Besides, we have that $\mathcal{V}(n)$ is increasing and

\begin{equation*}
\frac{2}{3}= \mathcal{V}(2) < {V}(n)<\frac{1 + e}{2\,e}= 0.683939...
\end{equation*}
\end{prop}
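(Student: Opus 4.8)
The plan is to reduce the repeated game to the ordinary one solved in the previous sections. Observe first that the repetition is triggered only by the all-zeros outcome: as soon as one success occurs, the event ``$I_i=0$ for all $i$'' becomes impossible and the round proceeds as an ordinary (non-repeated) game. Moreover, before the first success no player is allowed to pass, since the rule forces continuation on a $0$; hence player A holds the turn up to and including the first $1$. At that first success continuing gives A the ordinary value of the game that starts just afterwards, while passing gives one minus that value; as every parameter equals $1/n<\tfrac12$, the propositions of Section 2 show this ordinary value to be $<\tfrac12$ (for $n\ge3$; the case $n=2$ is checked directly below), so passing is optimal, and from then on the optimal play is again to pass on every $1$. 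Consequently A wins a completed round if and only if the total number of successes is odd, and the round is replayed if and only if there are no successes.

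With this reduction I would set up the renewal equation for A's overall winning probability $W$. Since A is again the first player after every restart,
\begin{equation*}
W = P(\text{odd number of successes}) + P(\text{no successes})\cdot W ,
\end{equation*}
so that $W = P(\text{odd})/\bigl(1-P(\text{zero})\bigr)$; equivalently, $W$ is the probability that a $\mathrm{Binomial}(n,1/n)$ variable is odd conditioned on its being positive. Substituting the values already computed, $P(\text{odd})=\tfrac12\bigl(1-(1-2/n)^n\bigr)$ and $P(\text{zero})=(1-1/n)^n$, yields
\begin{equation*}
\mathcal{V}(n)=\frac{1-(1-2/n)^n}{2\bigl(1-(1-1/n)^n\bigr)} .
\end{equation*}
Clearing by $n^n$ and using $(n-1)^n\bigl(n/(n-1)\bigr)^n=n^n$ turns this into the stated closed form; alternatively, factoring $1-x^n=(1-x)\sum_{k=0}^{n-1}x^k$ in numerator and denominator and cancelling the common factor $2(1-1/n)$ gives the convenient expression
\begin{equation*}
\mathcal{V}(n)=\frac{\sum_{k=0}^{n-1}\bigl(\tfrac{n-2}{n}\bigr)^k}{\sum_{k=0}^{n-1}\bigl(\tfrac{n-1}{n}\bigr)^k}.
\end{equation*}

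The limit is then immediate: $(1-2/n)^n\to e^{-2}$ and $(1-1/n)^n\to e^{-1}$, and since $1-e^{-2}=(1-e^{-1})(1+e^{-1})$ the factor $1-e^{-1}$ cancels, giving $\lim_n\mathcal{V}(n)=\tfrac12(1+e^{-1})=(1+e)/(2e)$. The boundary value is direct as well: for $n=2$ one has $P(\text{odd})=\tfrac12$ and $P(\text{zero})=\tfrac14$, whence $\mathcal{V}(2)=(\tfrac12)/(\tfrac34)=\tfrac23$.

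The remaining, and genuinely hardest, step is the monotonicity together with the resulting two-sided bound. This is where the difficulty lies: $\mathcal{V}(n)$ is a quotient of the two sequences $P(\text{odd})$ and $1-P(\text{zero})$, and by the earlier results \emph{both} of them are decreasing, so the naive ``increasing over decreasing'' heuristic does not apply and the sign of $\mathcal{V}(n+1)-\mathcal{V}(n)$ must be controlled by a careful comparison of their two rates of convergence. The approach I would take is to extend $\mathcal{V}$ to a real variable $x\ge2$ and study the sign of its derivative, or equivalently to use the refined expansion $(1-c/n)^n=e^{-c}\bigl(1-\tfrac{c^2}{2n}+O(n^{-2})\bigr)$ to develop $\mathcal{V}(n)$ to first order in $1/n$ and thereby pin down the direction in which the limit is approached. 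I expect this estimate to be the main obstacle, since it demands quantitative control of $(1-2/n)^n$ and $(1-1/n)^n$ that is sharp enough to settle the monotonicity rather than merely recover the limiting value $(1+e)/(2e)$.
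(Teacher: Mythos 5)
Your reduction (the repetition rule only matters before the first success, passing remains optimal at that first success since the ordinary continuation value is below $\tfrac12$, hence A wins a completed round iff the number of successes is odd) together with the renewal equation $W=P(\text{odd})+P(\text{zero})\,W$ is exactly the content of the paper's proof, which compresses it into one sentence: the optimal strategy is unchanged and the win probability is simply conditioned on $\sum_{i=1}^{n}I_{i}>0$, i.e.\ divided by $1-(1-1/n)^{n}$. Your version is in fact more careful than the paper's, which offers no justification that passing stays optimal under the repetition rule; your closed form, the value $\mathcal{V}(2)=\tfrac23$, and the limit $(1+e)/(2e)$ are all correct.

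The step you left open --- monotonicity and the upper bound --- is not proved in the paper either: it is asserted in a single sentence (``Moreover, $\mathcal{V}(n)$ is increasing and its limit is $\tfrac{1+e}{2e}$'') with no argument. Your suspicion that this step is the obstacle is more than vindicated: the claim is false, and the very expansion you propose shows it. Writing $\mathcal{V}(n)=\dfrac{n^{n}-(n-2)^{n}}{2\left(n^{n}-(n-1)^{n}\right)}$ and using $(1-c/n)^{n}=e^{-c}\left(1-\tfrac{c^{2}}{2n}+O(n^{-2})\right)$ gives
\begin{equation*}
\mathcal{V}(n)=\frac{1+e}{2e}\left(1+\frac{3-e}{2\,(e^{2}-1)}\cdot\frac{1}{n}+O(n^{-2})\right),
\end{equation*}
and since $3-e>0$ the limit is approached \emph{from above}, which is incompatible with an increasing sequence converging to its limit. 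Concretely, $\mathcal{V}(3)=\tfrac{13}{19}>\tfrac{1+e}{2e}$ (equivalent to $7e>19$), so the claimed upper bound already fails at $n=3$; and $\mathcal{V}(5)=\tfrac{1441}{2101}>\tfrac{21280}{31031}=\mathcal{V}(6)$ (cross-multiplication: $44{,}715{,}671>44{,}709{,}280$), so the sequence is not increasing. Numerically it rises from $\mathcal{V}(2)=\tfrac23$ to a maximum at $n=5$ and then decreases toward $(1+e)/(2e)$ from above, so the correct two-sided bound is $\tfrac23\leq\mathcal{V}(n)\leq\tfrac{1441}{2101}$. In short: do not try to complete this step --- the gap is in the paper's statement, not in your argument.
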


\begin{proof}
Obviously, the optimal strategy with this rule is the same as in the game in
its original version. The difference lies only in the probability of
winning, which is conditioned by $\sum_{i=1}^{n}I_{i}>0$. Thus, bearing in
mind that
\begin{equation*}
P\left( \sum_{i=1}^{n}I_{i}>0\right) =1-P\left(
I_{1}=I_{2}=...=I_{n}=0\right) =1-\left( 1-\frac{1}{n}\right) ^{n}
\end{equation*}

we have that
\begin{equation*}
\mathcal{V}(n)=\frac{{\left( \frac{-1 + n}{n} \right) }^n\, \left( -{\left(
\frac{-2 + n} {-1 + n} \right) }^n + {\left( \frac{n}{-1 + n} \right) }^n
\right) }{2} \cdot \frac{1}{1- (1-\frac{1}{n})^n}
\end{equation*}

Moreover, $\mathcal{V}(n)$ is increasing and its limit is $\frac{1+e}{2\,e}$.
\end{proof}

\section{Random parameters for the Bernoulli variables}

We finally determine the probability of player A winning (mean probability)
when the parameters are the results of uniform random variables, $\mathbf{U}%
[0,1]$. That is to say, before holding the competition, the parameters of
the Bernoulli variables are drawn via random trials of a uniform random
variable $\mathbf{U}[0,1]$ and these parameters are revealed to the players.

\begin{lem}
If the last $k$ variables $\{I_{i}\}_{i=n-k+1}^{n}$ have parameters $p_{i}$,
which are the results of the uniform random variables $\mathbf{U}[0,\frac{1}{2}]$,
then the win probability of the player whose turn it is at stage $n-k+1$ is
\begin{equation*}
\mathbf{V}_{n-k+1}=2^{-1-k}\,\left( -1+2^{k}\right)
\end{equation*}%
In particular, if $k=n$

\begin{equation*}
\mathbf{V}_{ 1}=2^{-1 - n}\,\left( -1 + 2^n \right)
\end{equation*}
\end{lem}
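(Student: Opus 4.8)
The plan is to exploit the fact that every parameter lies in $[0,\tfrac12]$, so the proposition guaranteeing $\mathbf{V}_{i}<\tfrac12$ whenever all the remaining parameters are below $\tfrac12$ (Proposition \ref{main}) applies throughout the last $k$ stages. Consequently $\max\{\mathbf{V}_{i+1},1-\mathbf{V}_{i+1}\}=1-\mathbf{V}_{i+1}$, and the dynamic-programming recurrence collapses to the affine form
\begin{equation*}
\mathbf{V}_{n-k+1}=p_{n-k+1}(1-\mathbf{V}_{n-k+2})+(1-p_{n-k+1})\mathbf{V}_{n-k+2}=p_{n-k+1}+(1-2p_{n-k+1})\,\mathbf{V}_{n-k+2}.
\end{equation*}
Here I interpret the quantity to be computed as the expected win probability, the expectation being taken over the independent draws $p_{i}\sim\mathbf{U}[0,\tfrac12]$, in line with the mean-probability convention of this section.

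Next I would take expectations. The key observation is that $\mathbf{V}_{n-k+2}$ is a function of $(p_{n-k+2},\dots,p_{n})$ only, hence independent of the fresh parameter $p_{n-k+1}$. Writing $w_{k}:=E[\mathbf{V}_{n-k+1}]$ and using $E[p_{n-k+1}]=\tfrac14$ together with the factorisation $E[(1-2p_{n-k+1})\mathbf{V}_{n-k+2}]=E[1-2p_{n-k+1}]\,E[\mathbf{V}_{n-k+2}]$, the affine recurrence becomes the scalar linear recurrence
\begin{equation*}
w_{k}=\tfrac14+\tfrac12\,w_{k-1},\qquad w_{1}=E[p_{n}]=\tfrac14 .
\end{equation*}

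Finally I would solve this recurrence. Its fixed point is $\tfrac12$, and $w_{k}-\tfrac12=\tfrac12\bigl(w_{k-1}-\tfrac12\bigr)$ gives $w_{k}-\tfrac12=\bigl(\tfrac12\bigr)^{k-1}\bigl(w_{1}-\tfrac12\bigr)=-\bigl(\tfrac12\bigr)^{k+1}$, that is $w_{k}=\tfrac12-2^{-k-1}=2^{-1-k}(2^{k}-1)$, which is the claimed value; the case $k=n$ is then immediate. Equivalently one may run a direct backward induction, verifying the base case $k=1$ and substituting the formula for $k-1$ into the expectation step.

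The only delicate point is the independence argument that licenses the factorisation of the expectation: it relies on the parameters being drawn independently and on the optimal action being fixed (always pass when $I_{i}=1$) throughout this range, so that $\mathbf{V}_{n-k+2}$ carries no dependence on $p_{n-k+1}$. Since Proposition \ref{main} guarantees $\mathbf{V}_{i}<\tfrac12$ at every relevant stage regardless of the realized parameter values, the strategy—and hence this independence—holds almost surely, and no special treatment of the boundary $p_{i}=\tfrac12$ (a null event) is required.
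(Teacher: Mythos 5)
Your proof is correct and takes essentially the same route as the paper: the paper likewise notes that all continuation values stay below $\tfrac12$, reduces the dynamic program to the affine form, averages over the uniform parameter to obtain the scalar recurrence $X_{i}=\tfrac14(1+2X_{i-1})$, and solves it (with base case $X_{0}=0$ rather than your equivalent $w_{1}=\tfrac14$). Your explicit justification of the factorisation of the expectation via independence of the parameter draws is a point the paper leaves implicit, but it is the same argument.
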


\begin{proof}
We denote by $X_{i}$ the win probability of player whose turn it is at stage
$n-i+1$. Bearing in mind that $t=p_{n-k+1}$ is the result of a uniform
random variable, $\mathbf{U}[0,\frac{1}{2}]$
\begin{equation*}
X_{i}=\mathbb{E}(tX_{i-1}+(1-t)(1-X_{i-1}))
\end{equation*}

\begin{equation*}
X_{i}=2\int_{0}^{\frac{1}{2}}\left( tX_{i-1}+(1-t)(1-X_{i-1})\right) dt=%
\frac{1}{4}(1+2X_{i-1})
\end{equation*}%
and solving with $X_{0}=0$ gives
\begin{equation*}
X_{k}=2^{-1-k}\,\left( -1+2^{k}\right)
\end{equation*}
\end{proof}

\begin{lem}
If we have $k-1$ end variables with parameters $p_{i}$ that are the result
of uniform random variables, $\mathbf{U}[0,\frac{1}{2}]$, and $p_{k}$ is the result
of a uniform random variable, $\mathbf{U}[\frac{1}{2},1]$, then the probability of
player A winning is
\begin{equation*}
J_{k}:=\frac{1+2^{-k}}{2}
\end{equation*}
\end{lem}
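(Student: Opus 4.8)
The plan is to reduce the statement to the preceding lemma by isolating the single variable whose parameter comes from $\mathbf{U}[\frac{1}{2},1]$ and treating it as the first variable the player inspects. Write $t$ for this parameter and let $W'$ denote the win probability of the continuation game formed by the remaining $k-1$ variables, all of whose parameters are drawn from $\mathbf{U}[0,\frac{1}{2}]$. First I would record two facts about $W'$: for every realization of the trailing parameters we have $W' < \frac{1}{2}$ --- this is exactly the proposition asserting $\mathbf{V}_k < \frac{1}{2}$ whenever all parameters lie below $\frac{1}{2}$ --- and, by the preceding lemma applied with $k-1$ in place of $k$, its expectation is
$$\mathbb{E}[W'] = 2^{-1-(k-1)}\left(-1 + 2^{k-1}\right) = \frac{1}{2} - 2^{-k} =: W.$$

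Next I would apply the dynamic-programming recurrence at the leading variable. Since $W' < \frac{1}{2}$ gives $1 - W' > W'$, the maximum in the recurrence is attained by $1 - W'$ (the player relinquishes the turn), so the win probability for a fixed realization is
$$t\,(1 - W') + (1 - t)\,W'.$$
I would then take expectations over all the draws. Because $t$ depends only on the leading variable while $W'$ depends only on the trailing $k-1$ variables, the two are independent; linearity together with independence factors the cross terms and yields
$$J_k = \mathbb{E}[t]\,(1 - W) + \left(1 - \mathbb{E}[t]\right) W = \mathbb{E}[t] + W - 2\,\mathbb{E}[t]\,W.$$

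Finally I would substitute $\mathbb{E}[t] = \frac{3}{4}$, the mean of $\mathbf{U}[\frac{1}{2},1]$, and $W = \frac{1}{2} - 2^{-k}$, and simplify to obtain $J_k = \frac{1}{2} + 2^{-(k+1)} = \frac{1 + 2^{-k}}{2}$. The degenerate case $k=1$, in which there are no trailing variables and $W = 0$, is covered by the same formula and reduces to $J_1 = \mathbb{E}[t] = \frac{3}{4}$. I do not anticipate a genuine obstacle: the only steps that need care are the observation that the continuation value stays below $\frac{1}{2}$ (so the recurrence collapses to the linear form $t(1-W')+(1-t)W'$ rather than retaining the maximum) and the clean separation of the expectation into a product of independent factors. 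Both are immediate from the earlier proposition and from the independence of the uniform draws.
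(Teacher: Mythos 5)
Your proof is correct and takes essentially the same route as the paper's: condition on the leading draw $t\sim\mathbf{U}[\frac{1}{2},1]$, plug in the preceding lemma's tail value $X_{k-1}=\frac{1}{2}-2^{-k}$, resolve the maximum to $1-W'$ because the continuation value stays below $\frac{1}{2}$, and average over $t$ (the paper writes this average as $2\int_{1/2}^{1}\cdots\,dt$, you as $\mathbb{E}[t]=\frac{3}{4}$). If anything, yours is the more careful rendition: you explicitly invoke independence to factor $\mathbb{E}[tW']=\mathbb{E}[t]\,\mathbb{E}[W']$ and attach $1-W'$ to the coefficient $t$, whereas the paper's displayed integrand has those two terms inadvertently swapped (a typo, since its final evaluation agrees with your, correct, expression).
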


\begin{proof}
We denote by $J_{k}$ the win probability of the player whose turn it is
after $n-k+1$. Reasoning similar to above

\begin{equation*}
J_k= \mathbb{E}(t X_{k-1}+ (1-t) (1-X_{k-1})
\end{equation*}
\begin{equation*}
J_k=2 \int_{\frac{1}{2}}^1 \left(t X_{k-1}+ (1-t) (1-X_{k-1})\right) dt
\end{equation*}
\begin{equation*}
=2 \int_{\frac{1}{2}}^1 2^{-1 - k}\,\left( -2 + 2^k + 4\,x \right) dt=\frac{1}{2}%
(1+2^{-k})
\end{equation*}
\end{proof}

\begin{lem}
If all the parameters $p_{i}$ are the result of uniform random variables, $%
\mathbf{U}[0,\frac{1}{2}]$, then player A's probability of winning is
\begin{equation*}
J_{k}:=\frac{1+2^{-k}}{2}
\end{equation*}
\end{lem}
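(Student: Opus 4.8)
The plan is to follow the same template as the two preceding lemmas: write the mean win probability as a one-step Bellman recursion, conditioning on the parameter of the first variable observed in the block and treating the remaining $k-1$ variables as an already-analysed tail. Concretely, let $J_{k}$ denote the mean win probability of the player to move when $k$ variables remain, each parameter drawn independently from $\mathbf{U}[0,\frac{1}{2}]$. Conditioning on the value $t$ of the outermost parameter, the $k-1$ variables behind it are exactly the pure $\mathbf{U}[0,\frac{1}{2}]$ tail treated in the first lemma above, whose to-move value is $X_{k-1}=2^{-k}\left(2^{k-1}-1\right)=\frac{1}{2}-2^{-k}$; the fact inherited from that lemma (and from the monotonicity propositions of the optimal-strategy section) that I will lean on is $X_{k-1}<\frac{1}{2}$.

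First I would fix the one-step optimal action. Since the tail value $X_{k-1}$ lies below $\frac{1}{2}$, on observing a success the player strictly prefers to pass, so the conditional one-step value is $t\,(1-X_{k-1})+(1-t)\,X_{k-1}=X_{k-1}+t\,(1-2X_{k-1})$. Because $t$ is independent of the tail, I would then take expectations and integrate against the uniform density, obtaining a first-order linear recurrence of the shape $J_{k}=a+b\,X_{k-1}$; solving it with the correct base case (the one-variable value $J_{1}=\mathbb{E}[t]$) is meant to produce the closed form $J_{k}=\frac{1+2^{-k}}{2}$. A useful shortcut, and the cleanest route to the asserted number, is the algebraic identity $\frac{1+2^{-k}}{2}=1-X_{k}$: the claimed value is precisely the complement of the to-move value of the first lemma, so once the two players' roles are matched correctly the closed form follows immediately from that lemma, with no need to re-solve the recurrence.

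The main obstacle is not the algebra but the bookkeeping at the two ends of the recursion. One must justify that the relevant optimal action is indeed to pass — this rests entirely on $X_{k-1}<\frac{1}{2}$, which is exactly what the first lemma together with the monotonicity propositions supply — and one must pin down the base case so that the induction closes to $\frac{1+2^{-k}}{2}$ rather than to its complement $X_{k}=\frac{1-2^{-k}}{2}$. The complementarity identity $J_{k}=1-X_{k}$ is precisely the point at which the player-to-move versus responder roles have to be reconciled, and getting that matching right is the single delicate step on which the stated value turns.
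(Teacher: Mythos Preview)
The paper gives no proof for this lemma, and in fact the statement as printed is false: it directly contradicts the first lemma of the same section, which already computes the to-move value with all parameters drawn from $\mathbf{U}[0,\tfrac12]$ and obtains $X_k=2^{-1-k}(2^k-1)=\tfrac12-2^{-k-1}<\tfrac12$. Since player~A \emph{is} the player to move at stage~$1$, his mean win probability in the situation described is $X_k$, not $J_k=\tfrac{1+2^{-k}}{2}=1-X_k$. The third lemma appears to be a mis-stated leftover; the quantity $J_k$ it names is exactly the one computed in the \emph{second} lemma, where the leading parameter is $\mathbf{U}[\tfrac12,1]$ rather than $\mathbf{U}[0,\tfrac12]$.

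Your setup is sound, and in fact your own computation exposes the inconsistency. Carrying out the integral you describe, with $t\sim\mathbf{U}[0,\tfrac12]$ and conditional value $t(1-X_{k-1})+(1-t)X_{k-1}=\tfrac12+2^{-k}(2t-1)$, gives $\mathbb{E}[2t-1]=-\tfrac12$ and hence $J_k=\tfrac12-2^{-k-1}=X_k$, i.e.\ exactly the recursion $J_k=\tfrac14(1+2J_{k-1})$ of the first lemma. Your base case $J_1=\mathbb{E}[t]=\tfrac14$ already disagrees with the claimed $\tfrac{1+2^{-1}}{2}=\tfrac34$. The ``complementarity shortcut'' $J_k=1-X_k$ cannot be rescued: there is no role reversal when all parameters lie in $[0,\tfrac12]$, so the to-move player is and remains player~A. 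The delicate step you flag is not delicate but impossible; the honest conclusion of your argument is that the lemma, as stated, is in error.
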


\begin{prop}
If we have a game with $n$ variables whose parameters are the result of the $%
n$ uniform random variables, $\mathbf{U}[0,1]$, player A's probability of
winning is
\begin{equation*}
\frac{2\,\left( 1-4^{-n}\right) }{3}.
\end{equation*}
\end{prop}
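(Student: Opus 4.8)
The plan is to compute $\mathbf{V}_{1}$ (now a mean over the random parameters) by conditioning on the position of the optimal threshold $\mathfrak{u}$, i.e. on the location of the \emph{last} Bernoulli variable whose random parameter satisfies $p_{i}\geq\frac{1}{2}$. The crucial structural fact, already implicit in the earlier propositions, is that $\mathbf{V}_{1}$ depends only on the suffix of parameters from $\mathfrak{u}$ onwards: as soon as $\mathbf{V}_{\mathfrak{u}}\geq\frac{1}{2}$, the recurrence gives $\mathbf{V}_{i}=p_{i}\mathbf{V}_{i+1}+(1-p_{i})\mathbf{V}_{i+1}=\mathbf{V}_{i+1}$ for every $i<\mathfrak{u}$, so that $\mathbf{V}_{1}=\mathbf{V}_{\mathfrak{u}}$ irrespective of the parameters preceding $\mathfrak{u}$. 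Consequently I only need the conditional law of the suffix.

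First I would treat the degenerate event that all $n$ parameters fall below $\frac{1}{2}$; this occurs with probability $2^{-n}$, and conditioned on it each $p_{i}$ is distributed as $\mathbf{U}[0,\frac{1}{2}]$, so the first of the three preceding lemmas gives the conditional value $\frac{1}{2}(1-2^{-n})$. Next, for each $j\in\{1,\dots,n\}$ I would condition on the event that there are exactly $j$ variables from $\mathfrak{u}$ to $n$, that is $\mathfrak{u}=n-j+1$, meaning $p_{n-j+1}\geq\frac{1}{2}$ while $p_{n-j+2},\dots,p_{n}<\frac{1}{2}$; this event has probability $2^{-j}$, and under it the relevant suffix consists of one variable distributed as $\mathbf{U}[\frac{1}{2},1]$ followed by $j-1$ variables distributed as $\mathbf{U}[0,\frac{1}{2}]$. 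This is precisely the configuration of the second preceding lemma, so the conditional value of $\mathbf{V}_{1}=\mathbf{V}_{n-j+1}$ equals $J_{j}=\frac{1+2^{-j}}{2}$. One checks that these probabilities sum to $\big(1-2^{-n}\big)+2^{-n}=1$, so the case analysis is exhaustive.

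Combining these via the law of total expectation yields
\begin{equation*}
\mathbf{V}_{1}=\sum_{j=1}^{n}2^{-j}\cdot\frac{1+2^{-j}}{2}+2^{-n}\cdot\frac{1-2^{-n}}{2},
\end{equation*}
after which it remains only to evaluate the two geometric sums $\sum_{j=1}^{n}2^{-j}=1-2^{-n}$ and $\sum_{j=1}^{n}4^{-j}=\frac{1}{3}(1-4^{-n})$; the $2^{-n}$ contributions cancel and the expression collapses to $\frac{2}{3}(1-4^{-n})$, as claimed. The only delicate point—and the step I would verify most carefully—is the reduction justifying that conditioning on $\mathfrak{u}=n-j+1$ leaves the suffix parameters conditionally independent with the stated truncated-uniform laws, and that the variables strictly before $\mathfrak{u}$ genuinely do not influence $\mathbf{V}_{1}$. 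Both facts hinge on the stabilization identity $\mathbf{V}_{1}=\mathbf{V}_{\mathfrak{u}}$ and on $\mathbf{V}_{n-j+1}$ being measurable with respect to $p_{n-j+1},\dots,p_{n}$ alone; once these are established, the remainder is a routine geometric-series computation.
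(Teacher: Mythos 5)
Your proposal is correct and takes essentially the same route as the paper: both condition on the position of the last parameter $\geq \frac{1}{2}$ (equivalently, the suffix length $j$, with probability $2^{-j}$ and conditional value $J_j$, plus the all-below-$\frac{1}{2}$ case with probability $2^{-n}$), and sum exactly the same series. Your write-up merely makes explicit the stabilization identity $\mathbf{V}_1=\mathbf{V}_{\mathfrak{u}}$ and the conditional truncated-uniform laws, which the paper leaves implicit.
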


\begin{proof}
The probability that the last parameter greater than or equal to $\frac{1}{2}$ will
be the $k$-th is $2^{-n+k-1}$ and the probability that all the parameters
are less than $\frac{1}{2}$ will be $2^{-n}$.

\begin{equation*}
\mathbb{E}(n)=2^{-1 - n}\,\left( -1 + 2^n \right)\cdot2^{-n}+\sum_{k=1}^n
J_k 2^{-k}=\frac{2\,\left( 1 - 4^{-n} \right) } {3}
\end{equation*}
\end{proof}

\section{Conclusions and future challenges}

The proposed adversarial version of the Last-Success-Problem has a very
simple optimal game strategy that does not require any calculation. It only
requires identifying the last variable whose parameter is greater than $\frac{1}{2}$
and, as from that point on, always giving up one's turn to one's opponent.
It seems interesting to pose the problem with non-independent random
variables. The Last-Success-Problem with dependent Bernoulli random
variables was addressed by Tamaki in \cite{tama}, who considered that $%
I_{1},...,I_{n}$ constitute a Markov chain with transition probabilities

\begin{equation*}
\alpha_j= P(I_{j+1}=1 | I_j=0)
\end{equation*}
\begin{equation*}
\beta_j= P(I_{j+1}=0 | I_j=1)
\end{equation*}

and established an optimal  stopping rule  with a \emph{Markov
version of the odds-theorem}.

We predict that the adversarial version with dependent variables will also
be simple and the optimal strategy will most likely consist in adopting, at
each $k$-th stage, the optimal strategy while assuming that the remaining
variables are independent $\widehat{I}_{k+1},...,\widehat{I}_{n}$ with
parameters (computable by recurrence)
\begin{equation*}
\widehat{p}_{i}=P(I_{i}=1|I_{k}=1)
\end{equation*}

In short, we conjecture the following.

\begin{con}
Let $I_{1},I_{2},...,I_{n}$ be $n$ dependent Bernoulli random variables. Let $p_{i,k}:=P(I_{i}=1|I_{k}=1)$. Then, the optimal
strategy for the player whose turn it is after observing the variable $%
I_{k}=1$ is to give up his turn to his opponent if and only if $p_{i,k}<\frac{1}{2}$
for all $i>k$.
\end{con}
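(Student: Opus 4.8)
The plan is to reduce the Markov-dependent game to a two-state backward dynamic program and then attempt the exact analogue of Propositions 1--3. Since $I_{1},\dots,I_{n}$ form a Markov chain, the only information relevant for the continuation after stage $k$ is the observed value of $I_{k}$. I would therefore define, for $x\in\{0,1\}$, the quantity $W_{k}(x)$ equal to the winning probability of the player who is about to observe $I_{k}$, given $I_{k-1}=x$ and optimal play from stage $k$ onwards. Conditioning on the value of $I_{k}$ and using that passing converts a continuation value $v$ into $1-v$ for the current player, one gets
\begin{equation*}
W_{k}(x)=P(I_{k}=1\mid I_{k-1}=x)\,\max\{W_{k+1}(1),1-W_{k+1}(1)\}+P(I_{k}=0\mid I_{k-1}=x)\,W_{k+1}(0),
\end{equation*}
with terminal condition $W_{n}(x)=P(I_{n}=1\mid I_{n-1}=x)$. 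In this language the optimal decision after observing $I_{k}=1$ is to give up the turn precisely when $W_{k+1}(1)<\tfrac12$, so the conjecture is equivalent to the purely probabilistic statement
\begin{equation*}
W_{k+1}(1)<\tfrac12\quad\Longleftrightarrow\quad p_{i,k}<\tfrac12\ \text{ for all } i>k .
\end{equation*}

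I would first dispatch the boundary case $k=n-1$, where $W_{n}(1)=p_{n,n-1}$ and the equivalence is immediate, and then proceed by backward induction mirroring the independent case: an analogue of Proposition 1 showing that once all the conditional parameters drop below $\tfrac12$ the values $W_{j}(1)$ remain below $\tfrac12$, together with analogues of Propositions 2--3 handling the last index at which a conditional parameter reaches $\tfrac12$. The ``if'' direction looks the more tractable, since one can expand the conditional marginals through the one-step kernel by Chapman--Kolmogorov, $p_{i,k}=\sum_{x}P(I_{i}=1\mid I_{i-1}=x)\,P(I_{i-1}=x\mid I_{k}=1)$, and substitute into the recurrence for $W$.

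The step I expect to be the genuine obstacle is that $W_{k+1}(1)$ is \emph{not} a function of the marginals $\{p_{i,k}\}_{i>k}$ alone: the $\max$ in the recurrence makes the value depend on the full transition kernel along each branch, not merely on the one-dimensional conditional laws. I would therefore stress-test the equivalence already for $n=3$, writing $W_{2}(1)$ explicitly in terms of $\alpha,\beta$. That short computation is revealing: taking (conditioned on $I_{1}=1$) the value $P(I_{2}=1\mid I_{1}=1)$ close to $1$ while $P(I_{3}=1\mid I_{2}=1)$ is slightly below $\tfrac12$ and $P(I_{3}=1\mid I_{2}=0)$ is near $0$ makes $W_{2}(1)<\tfrac12$, so giving up is optimal, even though $p_{2,1}\ge\tfrac12$. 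Thus a large conditional marginal can be neutralised by an unfavourable subsequent transition, and notably such a kernel can be chosen stochastically monotone, so positive dependence alone does not rescue the marginal criterion. The constructive repair would then be either to restate the threshold tautologically as ``$W_{k+1}(1)<\tfrac12$'' (correct but no longer kernel-free), or to isolate structural hypotheses on the transition kernel under which the marginals genuinely control the value and the clean form of the conjecture is recovered; pinning down the right such class is, I expect, the real content of the problem.
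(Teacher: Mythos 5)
The statement you were given is not proved in the paper at all: it appears there as an open \emph{conjecture} in the concluding section, supported only by the heuristic of replacing the future variables by independent ones with parameters $\widehat{p}_{i}=p_{i,k}$. So there is no proof of record to compare yours against; what matters is whether your analysis is sound, and it is --- indeed, carried through, it settles the conjecture \emph{negatively}. Your two-state dynamic program is the correct formalization (keeping after $I_k=1$ is worth $W_{k+1}(1)$, passing is worth $1-W_{k+1}(1)$), and your $n=3$ construction is a genuine counterexample once the parameters are ordered correctly. Concretely, take a Markov chain with $P(I_2=1\mid I_1=1)=0.9$, $P(I_3=1\mid I_2=1)=0.49$, $P(I_3=1\mid I_2=0)=0$. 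Then
\begin{equation*}
W_2(1)=0.9\cdot\max\{0.49,\,1-0.49\}+0.1\cdot 0=0.459<\tfrac12,
\end{equation*}
so giving up the turn (worth $0.541$) is strictly optimal even though $p_{2,1}=0.9\ge\tfrac12$: the conjecture prescribes the opposite action, and so does the paper's own heuristic (the surrogate independent game with parameters $0.9$ and $p_{3,1}=0.441$ has keep-value $0.5472$). One point in your narrative needs tightening: with $p_{2,1}=1-\epsilon$ and $P(I_3=1\mid I_2=1)=\tfrac12-\delta$, one gets $W_2(1)<\tfrac12$ iff $\delta<\epsilon/(2(1-\epsilon))$, so ``slightly below $\tfrac12$'' must mean $\delta$ small \emph{relative to} $\epsilon$, not merely small. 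With that quantifier order fixed, your counterexample stands, and your further remark is also correct: this kernel is stochastically monotone, so positive dependence does not rescue the marginal rule.

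The one place your proposal errs is the residual hope that the ``if'' direction is the tractable, likely-true half. It fails too, via negative dependence: take $P(I_2=1\mid I_1=1)=0.49$, $P(I_3=1\mid I_2=1)=0$, $P(I_3=1\mid I_2=0)=0.9$. Then $p_{2,1}=0.49$ and $p_{3,1}=0.51\cdot 0.9=0.459$, so all conditional marginals are below $\tfrac12$, yet
\begin{equation*}
W_2(1)=0.49\cdot\max\{0,\,1\}+0.51\cdot 0.9=0.949>\tfrac12,
\end{equation*}
because observing $I_2=1$ lets the player pass a turn that is now a guaranteed loss for the opponent ($I_3=0$ a.s.), while $I_2=0$ forecasts a likely win. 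So keeping is strictly optimal where the conjecture says to give up, and \emph{both} implications of the conjecture are false. Your structural diagnosis --- that $W_{k+1}(1)$ is a functional of the entire transition kernel while the conjectured rule sees only the one-dimensional conditional laws $p_{i,k}$ --- is exactly the right explanation, and your proposed repair (characterize the class of kernels for which the marginal rule is optimal) is the correct statement of what actually remains open.
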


It may also be interesting to pose the game with more than 2 players, in
which case different types of payment could be considered. For any version,
it is normal to consider the loser to be the player whose turn it is after
the last Bernoulli trial, but several types of payment may be considered for
the other  players. If we consider that the players who do not lose each
receive the same payment, we have the simplest version. In this respect, we
conclude by posing the challenge to determine the limit with $m$
players, when $n$ tends to infinity, of the loss probability of each player,
considering $n$ independent Bernoulli random variables with parameters $1/n$. In fact, for 3 players, it is no longer a trivial problem, as only the
limit for the probability of the first player losing is exactly
calculable in a relatively straightforward way. Using the \textsf{Mathematica }symbolic calculation package, we obtained  the following limit for the probability of the first player losing:
\begin{equation*}
loss_{1}=\frac{1}{3}+\frac{2\,\cos (\frac{\sqrt{3}}{2})}{3e^{\frac{3}{2}}}%
=0.42970463...
\end{equation*}%
However, it is no longer viable to find the exact limit of the probability
of the other two players losing via this path. Computing for large values of
$n$ allows an approximation, but only that. Specifically, we have that:

\begin{equation*}
loss_{2}\approx 0.383\text{ and }loss_{3}\approx 0.187
\end{equation*}%
In all the above calculations, we have assumed that the optimal strategy for
both players is to give up their turn whenever possible. Of course, this
will undoubtedly be true in this case. In general, however, there will be an
optimal strategy that does not always consist in passing one's turn to the
following player.

\end{document}